\newcommand{\tp}{{\scriptscriptstyle\mathsf{T}}}
\def \R{{\mathbb R}}
\let\O\undefined
\let\Re\undefined
\let\Im\undefined
\DeclareMathOperator{\O}{O}
\DeclareMathOperator{\U}{U}
\DeclareMathOperator{\V}{V}
\DeclareMathOperator{\I}{I}
\DeclareMathOperator{\GL}{GL}
\DeclareMathOperator{\SO}{SO}
\DeclareMathOperator{\Vol}{Vol}
\DeclareMathOperator{\Gr}{Gr}
\DeclareMathOperator{\SL}{SL}
\DeclareMathOperator{\tr}{tr}
\DeclareMathOperator{\SU}{SU}
\DeclareMathOperator{\Re}{Re}
\DeclareMathOperator{\Im}{Im}
\def\d{\partial}
\theoremstyle{plain}
\newtheorem{theorem}{Theorem}[section]
\newtheorem{proposition}[theorem]{Proposition}
\newtheorem{lemma}[theorem]{Lemma}
\newtheorem{corollary}[theorem]{Corollary}
\theoremstyle{definition}
\theoremstyle{remark}
\newtheorem{remark}[theorem]{Remark}
\icmltitlerunning{}
\begin{document}

\twocolumn[
\icmltitle{Intrinsic Gaussian Processes on Manifolds and \\Their Accelerations by Symmetry}

% It is OKAY to include author information, even for blind
% submissions: the style file will automatically remove it for you
% unless you've provided the [accepted] option to the icml2024
% package.

% List of affiliations: The first argument should be a (short)
% identifier you will use later to specify author affiliations
% Academic affiliations should list Department, University, City, Region, Country
% Industry affiliations should list Company, City, Region, Country

% You can specify symbols, otherwise they are numbered in order.
% Ideally, you should not use this facility. Affiliations will be numbered
% in order of appearance and this is the preferred way.
\icmlsetsymbol{equal}{*}

\begin{icmlauthorlist}
\icmlauthor{Ke Ye}{comp}
\icmlauthor{Mu Niu}{yyy}
\icmlauthor{Pokman Cheung}{}
\icmlauthor{Zhenwen Dai}{sch}
\icmlauthor{Yuan Liu}{yyy}
%\icmlauthor{}{sch}
%\icmlauthor{}{sch}
%\icmlauthor{}{sch}
\end{icmlauthorlist}

\icmlaffiliation{yyy}{School of Mathematics and Statistics, University of Glasgow, UK}
\icmlaffiliation{comp}{Company Name, Location, Country}
\icmlaffiliation{sch}{School of ZZZ, Institute of WWW, Location, Country}

\icmlcorrespondingauthor{Firstname1 Lastname1}{first1.last1@xxx.edu}
\icmlcorrespondingauthor{Firstname2 Lastname2}{first2.last2@www.uk}

% You may provide any keywords that you
% find helpful for describing your paper; these are used to populate
% the "keywords" metadata in the PDF but will not be shown in the document
\icmlkeywords{Machine Learning, ICML}

\vskip 0.3in
]

% this must go after the closing bracket ] following \twocolumn[ ...

% This command actually creates the footnote in the first column
% listing the affiliations and the copyright notice.
% The command takes one argument, which is text to display at the start of the footnote.
% The \icmlEqualContribution command is standard text for equal contribution.
% Remove it (just {}) if you do not need this facility.

%\printAffiliationsAndNotice{}  % leave blank if no need to mention equal contribution
%\printAffiliationsAndNotice{\icmlEqualContribution} % otherwise use the standard text.

\begin{abstract}
Amidst the growing interest in nonparametric regression, we address a significant challenge in Gaussian processes(GP) applied to manifold-based predictors. Existing methods primarily focus on low dimensional constrained domains for heat kernel estimation, limiting their effectiveness in higher-dimensional manifolds. Our research proposes an intrinsic approach for constructing GP on general manifolds such as orthogonal groups, unitary groups, Stiefel manifolds and Grassmannian manifolds. Our methodology estimates the heat kernel by simulating Brownian motion sample paths using the exponential map, ensuring independence from the manifold’s embedding. The introduction of our strip algorithm, tailored for manifolds with extra symmetries, and the ball algorithm, designed for arbitrary manifolds, constitutes our significant contribution. Both algorithms are rigorously substantiated through theoretical proofs and numerical testing, with the strip algorithm showcasing remarkable efficiency gains over traditional methods. This intrinsic approach delivers several key advantages, including applicability to high-dimensional manifolds, eliminating the requirement for global parametrization or embedding. We demonstrate its practicality through regression case studies (torus knots and eight-dimensional projective spaces) and by developing binary classifiers for real-world datasets (gorilla skulls' planar images and diffusion tensor images). These classifiers outperform traditional methods, particularly in limited data scenarios.
\end{abstract}

\section{Introduction}
\label{sec:intro}
Due to the high dimensionality and non-linearity of data, modelling problems on Euclidean spaces is no longer always efficient and satisfied. This provides an impetus to develop new theories and methods to model problems on manifolds. There is an enormous amount of practical problems which can be naturally modelled on various manifolds and efficiently studied by statistics on them. In this work we focus on matrix manifolds which are widely used in pattern recognition, medical and biological science. For example, medical models of the human kidney can be studied by the statistics of Lie groups \cite{fletcher2003statistics}; palaeomagnetic data sets are modelled and analysed on spheres \cite{gidskehaug1976statistics}; new algorithms for problems of pedestrian detection and object categorization are proposed through the investigation of the space of positive definite matrices \cite{jayasumana2013kernel}; biological and medical images are considered as points in shape spaces \cite{bookstein2013measurement,kendall1977diffusion}; clustering algorithms on Stiefel manifolds and Grassmannian manifolds are applied to image and video recognition problems \cite{chikuse2012statistics,turaga2011statistical}. On the other hand, as one of the most prominent methods, the Gaussian process (GP) has been extensively used in statistics and machine learning on Euclidean spaces \cite{Rasmussen2004}. However, it can not be directly generalized to model data on a general manifold. A major challenge in constructing GPs on manifolds is choosing a valid covariance kernel. This is a non-trivial problem and most of the focus has been on developing covariance kernels specific to a particular manifold \cite{jayasumana2016kernels,lafferty2005diffusion, jayasumana2013kernel}. In \citet{extrinsicGP}, an extrinsic GP is proposed on manifolds by first embedding the manifolds into higher-dimensional Euclidean spaces. The well known squared exponential kernel can then be applied on the images after embedding. However, such embeddings are not always available or easy to obtain for general manifolds. Even if the embedding exists, the dimension of the embedded Euclidean space could be much higher than the original manifold. The embedding of Grassmannians \cite{extrinsicGP} is such an example. Readers can also find an example in Appendix~\ref{subsec:projective spaces} where the dimension of the manifold is $8$ but the dimension of the embedded space is $25$. %\cite{Castillo2014} instead proposes to use randomly rescaled solutions of the heat equation to define a valid covariance kernel for reasonably a broad class of compact manifolds. However, the proposed heat kernels are computationally intractable, and authors of \cite{Castillo2014} did not provide the implementation of their approach in practice. 
\citet{niu2018intrinsic} proposes a class of intrinsic GP which refers to GP that employs the intrinsic Riemannian geometry of the manifold. The intrinsic GP uses heat kernels as covariance kernels. The heat kernel generalizes the popular and well-studied squared exponential kernel (also known as RBF kernel) to the Riemannian manifold, which arises from the Laplace operator and thus fully exploits the intrinsic geometry of the manifold. They utilise connections between heat kernels and transition densities of the Brownian motion(BM) on manifolds to obtain algorithms for approximating covariance kernels. \if Background and preliminaries on Gaussian processes and heat kernels are provided respectively in Subsection~\ref{subsec:background} and Appendix~\ref{appendix:heat kernel}.\fi Manifolds considered are mainly subsets of $\mathbb{R}^2$ and $\mathbb{R}^3$. 

In this paper, we present a method to simulate BM and estimate the kernel on a general Riemannian manifold. More importantly, if the manifold satisfies a symmetry condition, then our method can be accelerated by the proposed strip algorithm. Examples include classical Lie groups and their homogeneous spaces, such as orthogonal groups, Stiefel manifolds and Grassmannian manifolds. 
%We list in Table \ref{tab:expressions} the manifolds which are discussed in this paper. 
\if Basic facts about  Lie groups and homogeneous spaces are provided in Appendices~\ref{Liegroup} and \ref{appendix:homogeneous spaces}.\fi 

\if The approach of \cite{niu2018intrinsic} requires the parameterisation of the manifold so that the induced metric tensor can be derived from the local coordinates. The associated Laplace-Beltrami operator $\Delta_s$ of the Riemannian manifold can be defined. The Laplace-Beltrami operator is also the infinitesimal generator of the Brownian motion on the manifold. The Brownian motion on a Riemannian manifold in a local coordinate system is given by a system of stochastic differential equations (SDE) in the Ito form \cite{ hsu1988, hsu2008}. To simulate the Brownian motion sample paths, \cite{niu2018intrinsic} discretises the SDE in \cite{ hsu1988, hsu2008} using the Euler-Maruyama method (\cite{kloeden1992}). There is no issue with the SDE in its infinitesimal form. However, when it is approximated by a discretisation, the accuracy of this approximation is strongly related to the choices parametrisation. In this paper, we present a method of simulating the Brownian motion on a Riemannian manifold by exponential maps which do not require the parametrisation of the manifold. Detailed discussions are in Subsection \ref{subsec:Brownian motions on general Riem manifolds} and Section \ref{heatkernelmani} and for convenience of the reader, we also summarise corresponding algorithms in Appendix~\ref{appendix:algorithms}.

\cite{niu2018intrinsic} estimates the transition probability of the Brownian motion by counting the number of sample paths reaching the neighbourhood of the target point which is a ball with a small radius. We refer this method as the ball algorithm. It could be problematic when the dimension of the manifold is high, since the probability of the Brownian motion sample paths reaching a small ball centred on a given point could tends to zero as the dimension of the manifold increases. We propose a new approach referred as the strip algorithm in Subsection \ref{stripAlgorithm}  to estimate the transition density of the Brownian motion. It is capable of dealing with higher dimensional manifolds. The comparison between these two algorithms is given in Subsection \ref{compareBallStrip}. We test our new approach in Section~\ref{sec:examples}, which consists of two examples of function regression and two examples of classification.
\fi
\section{Intrinsic Gaussian Process on Manifolds}\label{sec:in-GP}

\subsection{Background}\label{subsec:background}
Let $M$ be a $m$-dimensional complete Riemannian manifold and let $\mathcal D = \{ (x_i,y_i), i =1,\ldots,n \}$ be the data, with $n$ the number of observations, $x_i \in M$ the predictor or location value of observation $i$ and $y_i$ the corresponding response variable. We would like to do inferences on how the output $y$ varies with the input $x$, including the prediction of the $y$-value at a new location $x_*$, which is not represented in the training dataset. Assuming Gaussian noise and a simple measurement structure, we let 
\begin{align}
y_i = f(x_i) + \epsilon_i, \  \  \    \epsilon_i \sim \mathcal N(0, \sigma_{noise}^2),  \  \  \ x_i\in M,
\end{align}
 where $\sigma_{noise}^2$ is the variance of the noise. 
%\cite{niu2018intrinsic}

We can place an Gaussian process prior for the unknown function $f:M \to \mathbb{R}$, we have 
\begin{align}
p(\text{{\bf f}} | x_1,x_2,...,x_n) = \mathcal N( \boldsymbol{0},\Sigma),
\end{align}
where {\bf f} is a vector containing the realisations of $f(\cdot)$ at the sample points $x_1,\ldots,x_n$, $f_i = f(x_i)$, and 
$\Sigma$ is the covariance matrix of these realisations induced by the intrinsic GP covariance kernel. The heat kernel $p_{t}^M(x_i,x_j)$ is used as the covariance kernel, where the time parameter $t$ of $p_{t}^M$ has the same effect as that of the length-scale parameter of the RBF kernel, controlling the rate of decay of the covariance. In particular, the entries of $\Sigma$ are obtained by evaluating the covariance kernel at each pair of locations, that is,
 \begin{align}
 \label{sk:heat}
 \Sigma_{ij} = \sigma_h^2 p_{t}^M(x_i,x_j).
 \end{align}
The hyperparameter $\sigma_h^2$ allows rescaling the heat kernel for extra flexibility.
Following standard practice for GPs, this prior distribution is updated with information in the response data to obtain a posterior distribution. The posterior distribution of $f$ evaluated at locations ${\bf X}=(x_1,...,x_n)$ has the following form:
\begin{align*}
f(x)| \mathcal{D} &\sim GP ( m_{post}, \Sigma_{post} ) \\
m_{post} &= \Sigma_{x, \bf{X} } (\Sigma_{\bf X,X} + \sigma_{noise}^2 {\bf I})^{-1} {\bf y}\\
\Sigma_{post}& = \Sigma_{x,x} - \Sigma_{x, \bf{X} } (\Sigma_{\bf X,X} + \sigma_{noise}^2 {\bf I})^{-1} \Sigma_{ \bf{X} ,x} ,
\end{align*}
where ${\bf y} = (y_1,...,y_n)$.
Let $\bf f_*$ be a vector of values of $f$ at test points which are not in the training sample. The joint distribution of $\bf f$ and $\bf f_*$ is multivariate normal. The predictive distribution $p( {\bf f_* | y} )$ is derived by marginalizing out $\bf f$. Namely, 
\small{
\[
p( {\bf f_* | y} ) = \int p(  {\bf f_* f | y}  ) d {\bf f}=\] \[ \mathcal N\left( \Sigma_{\bf f_*f} ( \Sigma_{\bf ff} + \sigma_{noise}^2 I )^{-1} {\bf y}  , 
\Sigma_{\bf f_*f_*} - ( \Sigma_{\bf ff} + \sigma_{noise}^2 I)^{-1} \Sigma_{\bf ff_*}   \right).
\] 
}

\normalsize
The key challenge for inference using intrinsic GPs is how to get the heat kernel $p_{t}^M(x_i,x_j)$. Let $\Delta_s$ be the Laplacian-Beltrami operator  on  $M$, and $\delta$ the Dirac delta function. A heat kernel of $M$ is a smooth function $p^M_t(x_i,x_j)$ on $M\times M\times \mathbb R^+$ that satisfies the heat equation:  
\begin{align}\label{eqn:heat equation 1}
\frac{\d}{\d t}p_{t}^M(x_i,x_j) &=\frac{1}{2}\Delta_s p_{t}^M(x_i,x_j).
%\label{eqn:heat equation 2}
\end{align}
$\lim_{t\rightarrow 0}p_{0}^M(x_i,x_j) =\delta(x_i,x_j), \ \ x_i, x_j \in M.$ Here the initial condition holds in the distributional sense \cite{berline2003heat}. If $M$ is the Euclidean space $\mathbb{R}^m$, the heat kernel has a closed form expression corresponding to time varying Gaussian function:
\begin{align*}
p_{t}^M({x_i},{x_j})
=\frac{1}{(2\pi t)^{d/2}}\,
  \exp\left\{-\frac{||{x_i}-{x_j}||^2}{ 2t }\right\}, \quad {x_i},{x_j} \in \mathbb R^m,
\end{align*} 
which can be seen as the scaled version of RBF kernel. Unfortunately, closed form expressions for $p_{t}^M$ do not exist for general Riemannian manifolds. 
%Explicit solutions are available only for very special cases, such as Euclidean spaces.  
Therefore, for most cases, one can not explicitly evaluate $p_{t}^M$ and the corresponding covariance matrices.  To revolve this issue and  bypass the need of solving the heat equation \eqref{eqn:heat equation 1} directly, we utilise the fact that the heat kernel on $M$ can be interpreted as the transition density of the Brownian motion.
%Similar to \cite{niu2018intrinsic}, 
We estimate the heat kernel $p_{t}^M(x_i,x_j)$ for a pair $(x_i, x_j)$ by simulating the BM on $M$ and numerically evaluate the transition density of the BM. However, unlike the method in \citet{niu2018intrinsic}, here we do not require an explicit parametrisation of the manifold. Also manifolds considered in Section~\ref{heatkernelmani} are not just simple subsets of $\mathbb R^2$ and $\mathbb R^3$. The simulation of the BM on a Riemannian manifold is discussed in Section~\ref{subsec:Brownian motions on general Riem manifolds} and the detailed algorithms are given in Section~\ref{heatkernelmani} for different types of manifolds respectively.

\subsection{ Brownian Motion on a Riemannian manifold}\label{subsec:Brownian motions on general Riem manifolds}
The goal of this subsection is to recall a method of simulating the Brownian motion on a Riemannian manifold by exponential maps, which was first proposed in \cite{mckean1960brownian} for Lie groups and later generalized to arbitrary Riemannian manifolds in \cite{gangolli1964construction}. %Before we proceed, we remark that the idea of simulating the Brownian motion by exponential maps is originally developed to study geometric properties of the heat kernel on a Riemannian manifold. We implemented it in Algorithm~\ref{alg:simulation of BM}.
Let $(M,g)$ be an $m$-dimensional Riemannian manifold with metric $g$ and let $x$ be a point on $M$. We denote by $T_x M$ the tangent space of $M$ at $x$, $W_{x}(\delta)$ a BM step with step variance $\delta$ in $T_xM$ from the origin and $\exp_x: U_x \to M$ the exponential map from an open neighborhood $U_x \subseteq T_x M \simeq \mathbb{R}^d$ around the origin. We have Algorithm~\ref{alg:simulation of BM} to numerically simulate a Brownian motion sample path on $M$.

\begin{algorithm}[tb]
   \caption{Simulation of the Brownian motion on a Riemannian manifold}
   \label{alg:simulation of BM}
\begin{algorithmic}
   \STATE  Initialize $x_0 = x$ and $B_{x_0}(0) = x_0$, $t$ is the diffusion time\; $T=t/\delta$ is the number of Brownian motion steps.
  % \REPEAT
   %\STATE Initialize $noChange = true$.
   \FOR{ $j =1, 2, \dots,T $ }
  % \IF{$x_i > x_{i+1}$}
   \STATE compute $W_{x_{j-1} }( \delta )$;\ (Brownian motion sample path in $U_{x_{j-1} }$ with step variance $\delta$)
   \STATE compute $B_{x_0}(j\delta) = \exp(W_{x_{j-1}}( \delta ))$; (one-step BM sample path from $x_{j-1}$ on $M$)
   \STATE set $x_j = B_{x_0}( j \delta)$; 
   %\ENDIF
   \ENDFOR
   \STATE set $\boldsymbol{B}_{x_0} = \{ B_{x_0}(0), B_{x_0}(\delta), B_{x_0}(2\delta), \dots, B_{x_0}(t)  \}$; (BM sample path from $x_0$ on $M$)
  % \UNTIL{$noChange$ is $true$}
\end{algorithmic}
\end{algorithm}
The following fact ensures that $\boldsymbol{B}_{x_0}$ obtained in Algorithm \ref{alg:simulation of BM} is indeed an approximation of a BM sample path on $(M,g)$. %numerical
\begin{theorem}\cite{gangolli1964construction}\label{thm:BM on general manifolds}
The path $\boldsymbol{B}_{x_0} $ converges to a Brownian sample path on $M$ with probability $1$, as $\delta \to 0$.
\end{theorem}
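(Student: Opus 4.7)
The characterization of a Brownian motion on $(M,g)$ that I would use is the martingale problem: a continuous $M$-valued process $B_t$ is a Brownian motion starting at $x_0$ if $B_0 = x_0$ and for every $f \in C^\infty_c(M)$ the process
\[
M^f_t \;=\; f(B_t) - f(x_0) - \tfrac{1}{2}\int_0^t \Delta_s f(B_s)\,ds
\]
is a local martingale. So the plan is to show (i) the piecewise-constant (or piecewise-geodesic) interpolation of $\boldsymbol{B}_{x_0}$ is tight on path space, (ii) every subsequential limit solves the martingale problem for $\frac{1}{2}\Delta_s$, and then (iii) upgrade the resulting distributional convergence to almost sure convergence along a single underlying Gaussian input.

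For the generator computation I would work in normal coordinates based at each step $x_{j-1}$. In such coordinates the metric satisfies $g_{ab}(x_{j-1}) = \delta_{ab}$ and the Christoffel symbols vanish at $x_{j-1}$, so that $\Delta_s f(x_{j-1}) = \sum_a \partial_a^2 f(x_{j-1})$. Writing $W = W_{x_{j-1}}(\delta)$, which is a centred Gaussian in $T_{x_{j-1}}M \cong \mathbb{R}^m$ with covariance $\delta\, g^{-1}(x_{j-1}) = \delta I_m$, a Taylor expansion of $f\circ \exp_{x_{j-1}}$ at the origin gives
\[
f(\exp_{x_{j-1}}(W)) = f(x_{j-1}) + W^a \partial_a f(x_{j-1}) + \tfrac{1}{2} W^a W^b \partial_a\partial_b f(x_{j-1}) + R,
\]
with a remainder bounded by $C\,\|W\|^3$ locally. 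Taking conditional expectations, the first-order term vanishes by symmetry, the second-order term yields $\tfrac{\delta}{2}\Delta_s f(x_{j-1})$, and the remainder is $O(\delta^{3/2})$ because Gaussian third absolute moments scale as $\delta^{3/2}$. Hence
\[
\mathbb{E}\!\left[f(x_j)\mid x_{j-1}\right] - f(x_{j-1}) \;=\; \tfrac{\delta}{2}\Delta_s f(x_{j-1}) + O(\delta^{3/2}),
\]
uniformly on compact sets, which identifies $\tfrac{1}{2}\Delta_s$ as the limiting generator.

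Tightness of the interpolated sequence on $C([0,t], M)$ would follow from a Kolmogorov-type estimate: the Gaussian tails of each step, together with the exponential map being a $C^\infty$ local diffeomorphism and hence Lipschitz on compact charts, give $\mathbb{E}\,d_M(x_{j+k},x_j)^4 = O((k\delta)^2)$. Combined with the generator convergence above, the Stroock--Varadhan machinery (or Ethier--Kurtz, Theorem 8.2.1) identifies every weak limit as the unique solution of the martingale problem, namely Brownian motion on $M$.

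I expect the genuinely hard step to be (iii), promoting weak convergence to almost sure convergence as stated in the theorem. The cleanest way is to realise all the approximations on a common probability space via a single cylindrical Brownian motion in the orthonormal frame bundle $\mathcal{O}(M)$: solve the Stratonovich SDE $dU_t = \sum_a H_a(U_t)\circ dB^a_t$ for the horizontal lift, whose projection to $M$ is the intrinsic Brownian motion (this is the standard Eells--Elworthy--Malliavin construction). One then checks that the piecewise-geodesic scheme in the algorithm is precisely the Euler scheme for this SDE under orthonormal frame transport, and almost sure uniform convergence on $[0,t]$ follows from the usual Euler-scheme error estimates for SDEs with smooth coefficients, localised by a stopping time that keeps the path in a compact set (the algorithm's use of exponential charts keeps us inside a normal neighbourhood at each step with high probability). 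The only subtlety is that the algorithm as stated re-samples the Gaussian in the local tangent space rather than transporting a global driving noise, so the coupling must be set up carefully, but once the frame-bundle identification is made the argument is standard.
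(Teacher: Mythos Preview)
The paper does not prove this theorem at all: it is quoted verbatim from \cite{gangolli1964construction} as a black-box justification for Algorithm~\ref{alg:simulation of BM}, with no argument supplied. So there is no ``paper's own proof'' to compare against; your proposal is an attempt to reconstruct the cited result.

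As a sketch of weak convergence your steps (i)--(ii) are sound and standard: the normal-coordinate Taylor expansion correctly identifies $\tfrac{1}{2}\Delta_s$ as the one-step generator with $O(\delta^{3/2})$ error, and the Kolmogorov moment bound plus Stroock--Varadhan uniqueness gives convergence in law to Brownian motion on $M$. For the purposes of this paper (estimating transition densities by Monte Carlo) weak convergence is all that is actually used, so in that sense your argument already delivers what is needed.

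Where your proposal has a genuine gap is exactly where you flag it: step (iii). The theorem asserts almost sure convergence, and for that statement to even be well posed the schemes for different $\delta$ must live on a single probability space with a common driving noise. Algorithm~\ref{alg:simulation of BM} as written draws fresh Gaussians at every step for every $\delta$, so there is no canonical coupling. Gangolli's original construction fixes an i.i.d.\ sequence of tangent-space Gaussians once and for all and refines the partition, which is what makes the almost sure statement meaningful; your frame-bundle/Eells--Elworthy--Malliavin idea is the right modern substitute, but you would still have to specify explicitly how the increments $W_{x_{j-1}}(\delta)$ in the algorithm are manufactured from a fixed $\mathbb{R}^m$-valued Brownian motion via parallel transport of frames, and then invoke a strong Euler--Maruyama error bound on the horizontal SDE. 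Without that explicit coupling, what you have is a proof of convergence in distribution, not with probability~$1$.
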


We remark that in practice the main difficulty of applying Algorithm \ref{alg:simulation of BM} to simulate BM sample paths is that in general, the exponential map on $(M,g)$ is not explicitly known. However, we will see in the sequel that for the Riemannian manifolds considered in this paper such as Lie groups and their homogeneous spaces, Algorithm \ref{alg:simulation of BM} is a convenient and efficient way to simulate BM sample paths. Detailed algorithms of simulating the BM on different types of manifolds are given in Section~\ref{heatkernelmani}. In order to compare Algorithm~\ref{alg:simulation of BM} with the method using stochastic differential equation in \citet{niu2018intrinsic}, we consider the example of the standard sphere parametrised by latitude $\phi$ and longitude $\theta$ in details. This coordinate system cannot cover the north and south poles -- they are the singularities. In this coordinate system, the BM in \citet{niu2018intrinsic} is described by 
%\[
$(d\phi, d\theta )  = ( -\frac{\tan(\phi) }{2}  dt + dB_{\theta}, \frac{1}{\cos(\phi)} dB_{\phi} ).$
%\] 
Note that near the poles the drift velocity $( - \tan(\phi) /2  , 0 )$ becomes large and points away from the poles (huge repulsive drift); There is no issue with the stochastic differential equation in its infinitesimal form. However, when it is approximated by a discretisation, near the poles the drift term may become too large for the simulated step to be a good approximation of the actual BM. However in our approach, the simulation of BM does not require the parametrisation of the space and hence we do not have such a problem at the poles.

%%%%%%%%%%%%%%%%%%%%%%%%%%%%%%%%%%%%%%%%%%%%%%%%%%%%%%%%%%%%%%%%%%%%%%%%%%%%%%%%%%%%%%%%%%%%%%%%%%%%%%%%%%%%%%%%%%%%%%%%%%%%
\section{Estimate the heat kernel from BM paths}
To explore the connection between the heat kernel and the BM on a manifold, we let $B_{x_0}(t)$ be the BM on $M$ starting from $x_0 = B_{x_0}(0)$. The transition probability of $B_{x_0}(t) \in D \subseteq M$ at time $t$, for any Borel set $D$, is given by 
\begin{align}
\label{BMprob}
\mathbb{P}\big[B_{x_0}(t)\in D\,|\,B_{x_0}(0)=x_0\big] = \int_D p^M_t(x_0,x)dx,
\end{align}
where the integral is defined with respect to the volume form of $M$. In \cite{niu2018intrinsic}, authors estimate the heat kernel on a given manifold via approximating the integral in \eqref{BMprob} by simulating the BM sample paths and numerically evaluating the transition probability. In the rest of this section, we will discuss how to efficiently simulate BM sample paths on manifolds using two methods of numerical differential geometry.

\begin{figure*}[t]
%\figuresize{.58}
\begin{center}
 \subfigure[ball algorithm]{ \label{fig:ball} \includegraphics[width=0.32\textwidth,height=0.32\textwidth]{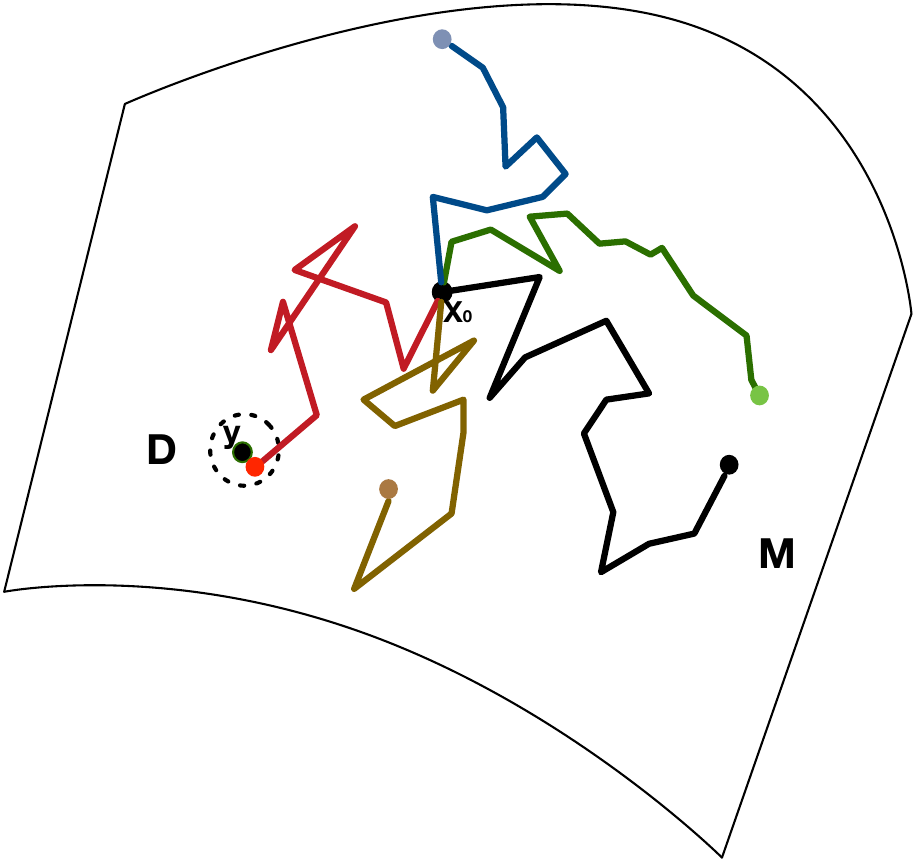} } 
    \subfigure[strip algorithm]{ \label{fig:strip} \includegraphics[width=0.32\textwidth,height=0.32\textwidth]{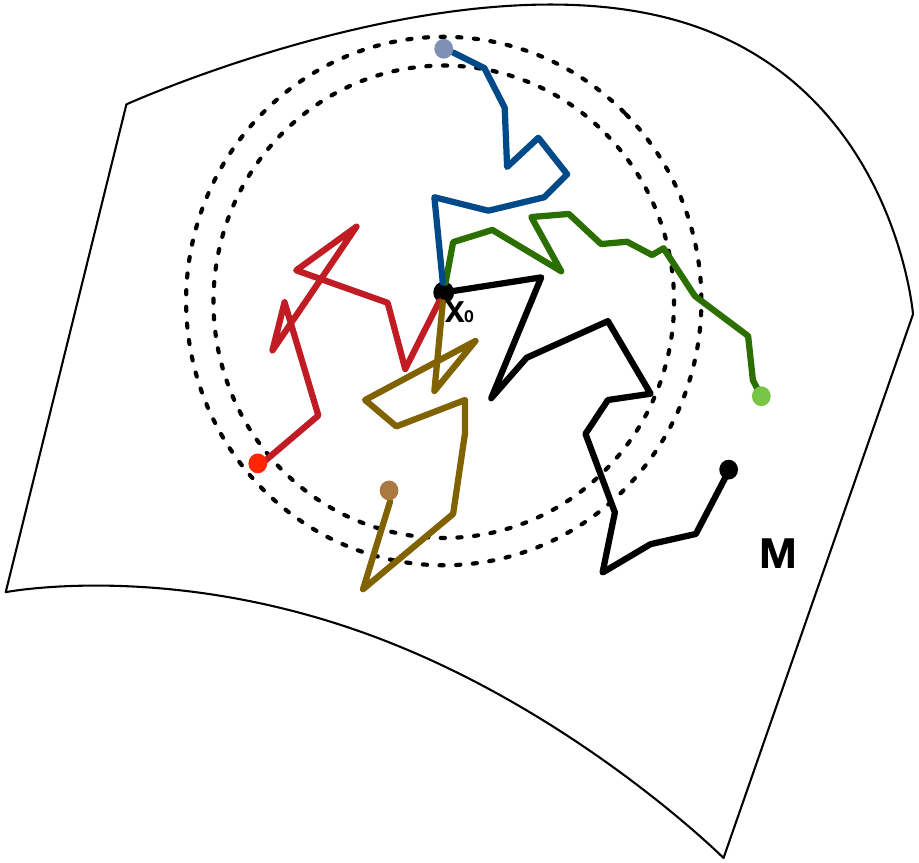} }
\caption{ \label{fig:examples} Illustrative examples of the Brownian motion on manifold $M$. Five independent BM paths from time $0$ to $t$ represented by the solid coloured lines in both \ref{fig:ball} and \ref{fig:strip}. $x_0$ is the starting point of BM paths. In \ref{fig:ball} only one sample path (red) reaches $D$ at time $t$, so the transition probability is $1/5$. In \ref{fig:strip} there are two paths (red and blue) reach the strip at time $t$ so the transition probability is $2/5$.}   
\end{center}
\end{figure*}

\subsection{Ball algorithm}
We first recall from \cite{niu2018intrinsic} that the transition probability in \eqref{BMprob} can be approximated by counting the number of sample paths reaching the neighbourhood of the target point. We refer to it as the ball method. Let $\{ B_x(t): t>0 \}$ be the BM on $M$ with starting point $B_x(0)=x$, $x\in M$ and let $N$ be the number of simulated sample paths. For $t>0$ and $y\in M$ the probability of $B_x(t)$ in a small neighbourhood $D$ of $y$ can be estimated by counting how many BM paths reach $D$ at time $t$. An illustrative diagram is shown in Figure \ref{fig:ball}. The transition density is approximated by
\begin{equation}\label{eqn:heat kernel estimate}
p_t^M(x,y) \approx \frac{1}{\Vol(D)} \frac{k}{N},
\end{equation}
where $\Vol(D)$ is the volume of $D$ and $k$ is the number of Brownian motion sample paths falling into $D$ at time $t$. Based on \eqref{eqn:heat kernel estimate} and Algorithm~\ref{alg:simulation of BM}, we have Algorithm~\ref{alg:estimation of heat kernel} to estimate the heat kernel $p_t^M(x,y)$.

\begin{algorithm}[tb]
   \caption{estimate of the heat kernel by ball Algorithm}
   \label{alg:estimation of heat kernel}
\begin{algorithmic}
\STATE given $x,y\in M$, $t > 0$, $m, N\in \mathbb{N}$, $\epsilon > 0$, compute $V = \Vol (\lbrace z\in M: d(z,y) \le \epsilon \rbrace)$;
\STATE set $k = 0$; 
\FOR{ $i =1, \dots, N$ }
\STATE sample a Brownian motion $B_x(\tau)$ for $\tau \in [0,t]$ starting from $x$ by Algorithm~\ref{alg:simulation of BM};
 \STATE set $z = B_x (t)$; 
  \IF{$d(y,z) < \epsilon$}
  \STATE set $k = k + 1$;  
  \ENDIF
  \ENDFOR
 \STATE set $p  = \frac{k}{NV}$. $\{$an estimate of $p_t^M(x,y)$ $\}$;
\end{algorithmic}
\end{algorithm}

\subsection{Strip algorithm} \label{stripAlgorithm}
We present in this subsection an accelerated algorithm for an estimate of the heat kernel on a given manifold. We call it the strip algorithm. Suppose that $M$ is a Riemannian manifold of dimension $m$. We denote by $d(x,y)$ the geodesic distance between $x,y\in M$, i.e., the minimum of lengths of piecewise smooth curves on $M$ connecting $x$ and $y$. We define for each $x\in M$, $d_0 \ge 0,\epsilon >0$ the set
\begin{align*}
S_x(d_0,\epsilon) :=\left \{
z\in M: \lvert d(x,z) -  d_0 \rvert < \epsilon 
\right \}
\end{align*}
We also denote by $S_x(d_0)$ the set consisting of all $z\in M$ such that $d(x,z) = d_0$. From now on, we call $S_x(d_0,\epsilon)$ the $\epsilon$-strip of $S_x(d_0)$. It is tempting to think $S_x(d_0,\epsilon)$ as the tube of $S_{x}(d_0)$ of radius $\epsilon$. This is not always the case, but it is true with a minor assumption on $M$, which is obviously satisfied by manifolds we considered in this paper. We refer readers to Lemma~\ref{lemma:tube} for the description of this minor assumption. Moreover, we prove in Lemma~\ref{lemma:submanifold} that $S_{x}(d_0)$ is a closed submanifold of $M$ for most choices of $d_0$. Hence we may apply Theorem~\ref{thm:tube formula} to estimate the volume of the tube $S_x(d_0,\epsilon)$. The aforementioned technical results are all recorded in Appendix~\ref{appendix: tubes}.

We assume that the heat kernel $p^M_t(x,y)$ is a function of distance at some $x\in M$, i.e., 
\begin{equation}\label{eqn:distance dependence}
p_t^M(x,y_1) = p_t^M(x,y_2),\quad \text{whenever}~d(x,y_1) = d(x,y_2).
\end{equation}
Manifolds satisfying \eqref{eqn:distance dependence} include Euclidean spaces, spheres, projective spaces and their quotients. For such a manifold, we have the following analogue of \eqref{eqn:heat kernel estimate}:
\begin{equation}\label{eqn:heat kernel estimate acceleration}
p^M_t(x,y) \approx \frac{1}{\Vol(S_x(d_0,\epsilon))} \frac{k}{N},
\end{equation}
where $d_0 = d(x,y)$, $N$ is the total number of sampling BM paths starting from $x$ on $M$ and $k$ is the number of BM paths falling into $S_x(d_0,\epsilon)$ at time $t$. From \eqref{eqn:heat kernel estimate acceleration} we obtain Algorithm~\ref{alg:acc estimation of heat kernel} for the estimate of the heat kernel on a manifold satisfying property \eqref{eqn:distance dependence}.
Once we have the point estimates of $p^M_t(x,y)$ for some pairs of $(x, y)$, we can use some standard interpolation methods to learn $p^M_t(x,y)$ as a function of distance.
\begin{algorithm}[tb]
   \caption{estimation of heat kernel on a manifold by strip algorithm}
   \label{alg:acc estimation of heat kernel}
\begin{algorithmic}
\STATE given $x\in M$, $d_0> 0 $, $t > 0$, $m, N\in \mathbb{N}$, $\epsilon > 0$, compute $V = \Vol(S_x(d_0,\epsilon))$;
\STATE set $k = 0$; 
\FOR{ $i =1, \dots, N$ }
\STATE  sample a Brownian path $B_x(\tau)$ for $\tau \in [0,t]$ starting from $x$ by Algorithm~\ref{alg:simulation of BM}; 
 \STATE set $z = B_x (t)$; 
  \IF{ If $d(x,z) < d_0 + \epsilon$ and $d_0 - \epsilon < d(x,z)$}
  \STATE set $k = k + 1$;  
  \ENDIF
  \ENDFOR
 \STATE set $p  = \frac{k}{NV}$. $\{$an estimate of $p_t^M(x,y)$ for any $(x,y)$ with $d(x,y) = d_0$ $\}$;
\end{algorithmic}
\end{algorithm}
Mathematically, \eqref{eqn:heat kernel estimate acceleration} can be formulated as the following result which in turn also validates Algorithm~\ref{alg:acc estimation of heat kernel}.
\begin{theorem}
We denote by $\hat{p}^M_t(x,y)$ the estimator $\frac{1}{\Vol(S_x(d_0,\epsilon))} \frac{k}{N}$ in Algorithm~\ref{alg:acc estimation of heat kernel}, where $\epsilon$ is the radius of the strip, $N$ is the number of BM sample paths and $d_0$ is the geodesic distance between $x$ and $y$. Then $\hat{p}^M_t(x,y)$ is asymptotically unbiased and consistent, i.e., 
\[
\lim_{\epsilon \to 0} \operatorname{E}( \hat{p}^M_t(x,y) ) = {p}^M_t(x,y), \quad 
    \lim_{\frac{\epsilon^m}{N} \to 0} \operatorname{Var}( \hat{p}^M_t(x,y) ) = 0,
\]
where $m$ is the dimension of the manifold $M$.
\end{theorem}
The proof of Theorem 3.1 is provided in Appendix E.

The idea behind the strip method is similar to that of the ball method: If we treat the strip as a collection of many small balls, then the transition probability for the strip would approximately be the sum of the transition probability for these balls. Moreover, the transition density is calculated as the ratio of the transition probability and the strip volume. An illustrative diagram is shown in Figure \ref{fig:strip}. For a fixed $x\in M$, if $p^M_t(x,y)$ does not satisfy \eqref{eqn:distance dependence}, then Algorithm~\ref{alg:acc estimation of heat kernel} estimates the average density of $p^M_t(x,y)$ on $S_x(d_0,\epsilon)$. To be more precise, we have 
\begin{align*}
\frac{1}{\Vol(S_x(d_0,\epsilon))} \int_{S_x(d_0,\epsilon)} p^M_t(x,y) dy \approx  \frac{1}{\Vol(S_x(d_0,\epsilon))} \frac{k}{N},
\end{align*}
where $k,N$ and $\Vol(S_x(d_0,\epsilon))$ are the same as those appeared in \eqref{eqn:heat kernel estimate acceleration}.

To conclude this subsection, we remark that Algorithm~\ref{alg:estimation of heat kernel} requires the computation of the volume of the strip $S_x(d_0,\epsilon)$. For readers' convenience, we record formulae for the volume of a tube on a Riemannian manifold in Appendix~\ref{appendix: tubes}.

\subsection{Comparison of Ball Algorithm and Strip Algorithm} \label{compareBallStrip}
It is clear that Algorithm~\ref{alg:estimation of heat kernel} is applicable to a more general class of manifolds than Algorithm~\ref{alg:acc estimation of heat kernel}, since the latter can only be applied to $M$ where $p_t^M(x,y)$ only depends on $d(x,y)$ if we fix $t$ and $x\in M$. 
%Table \ref{tab:expressions} gives a list of manifolds to be discussed in Section~\ref{heatkernelmani} together with the applicability of the two methods to these manifolds. 
\if The closed formulas of heat kernel are only available for Euclidean space and real hyperbolic space. Here the ``closed formula" means analytic expression. The power series expression of the heat kernel may exist for some manifolds such as sphere.
The ball algorithm is applicable for all manifolds considered in this paper. The strip algorithm is only applicable for some in the list.\fi However, when both algorithms are applicable, intuitively Algorithm~\ref{alg:acc estimation of heat kernel} is more efficient than Algorithm~\ref{alg:estimation of heat kernel}. \if in the sense that Algorithm~\ref{alg:acc estimation of heat kernel} requires fewer Brownian motion sample paths to estimate the heat kernel. In other words, a lot more sample paths need to be simulated to reach the `ball' than the `strip'.\fi For concrete examples, we compare in Figure~\ref{fig:algorithm comparison} the two algorithms on $\mathbb{R}^n$ for $n= 1,2,3$ respectively. By using the same number of Brownian motion sample paths, the strip algorithm outperforms the ball algorithm as the dimension of $M$ increases. Theorem~\ref{prop:compare algorithms} below explains why the strip method is much more efficient.

\begin{figure*}[h!]
\begin{center}
 \subfigure[$\mathbb{R}^1$]{ \label{ex:R1} \includegraphics[width=0.31\textwidth,height=0.31\textwidth]{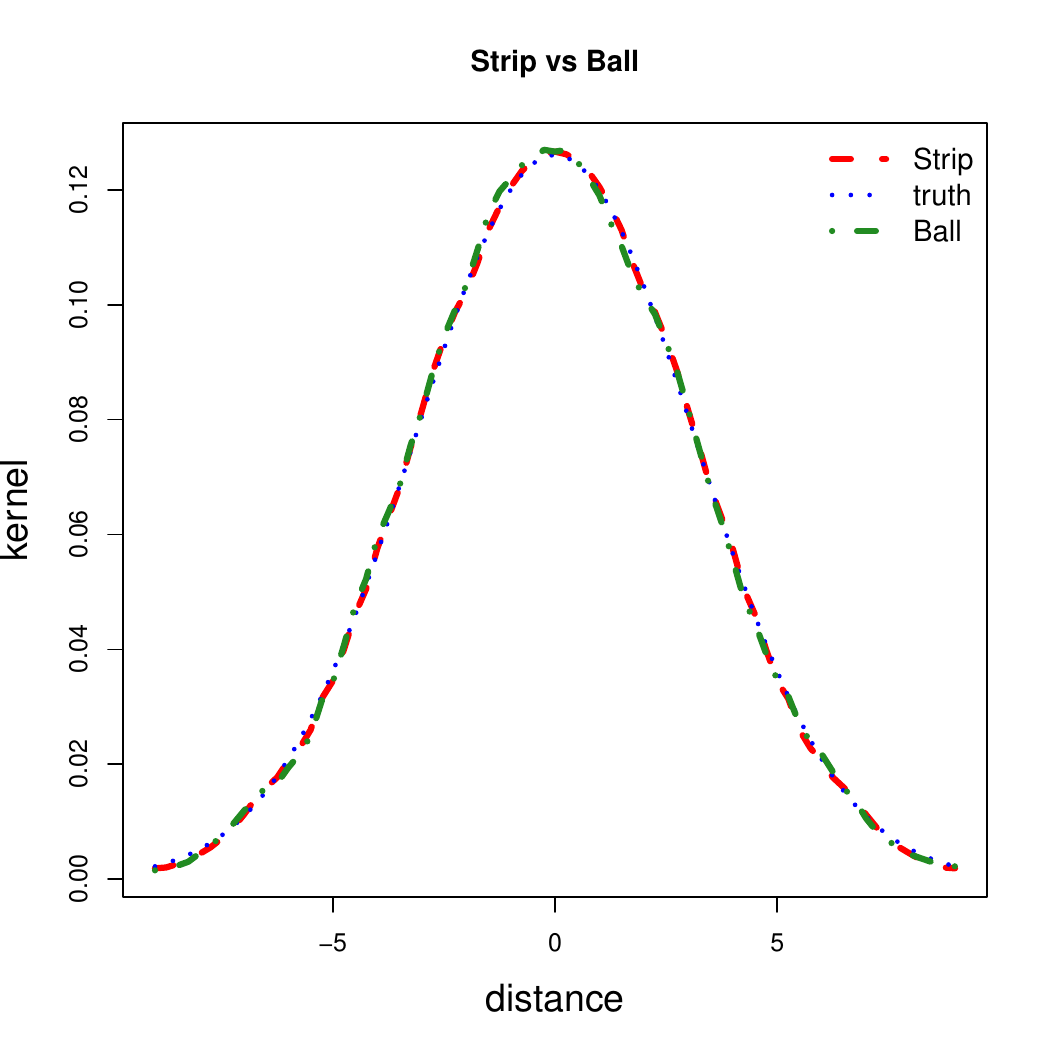} } 
 \subfigure[$\mathbb{R}^2$]{ \label{ex:R2} \includegraphics[width=0.31\textwidth,height=0.31\textwidth]{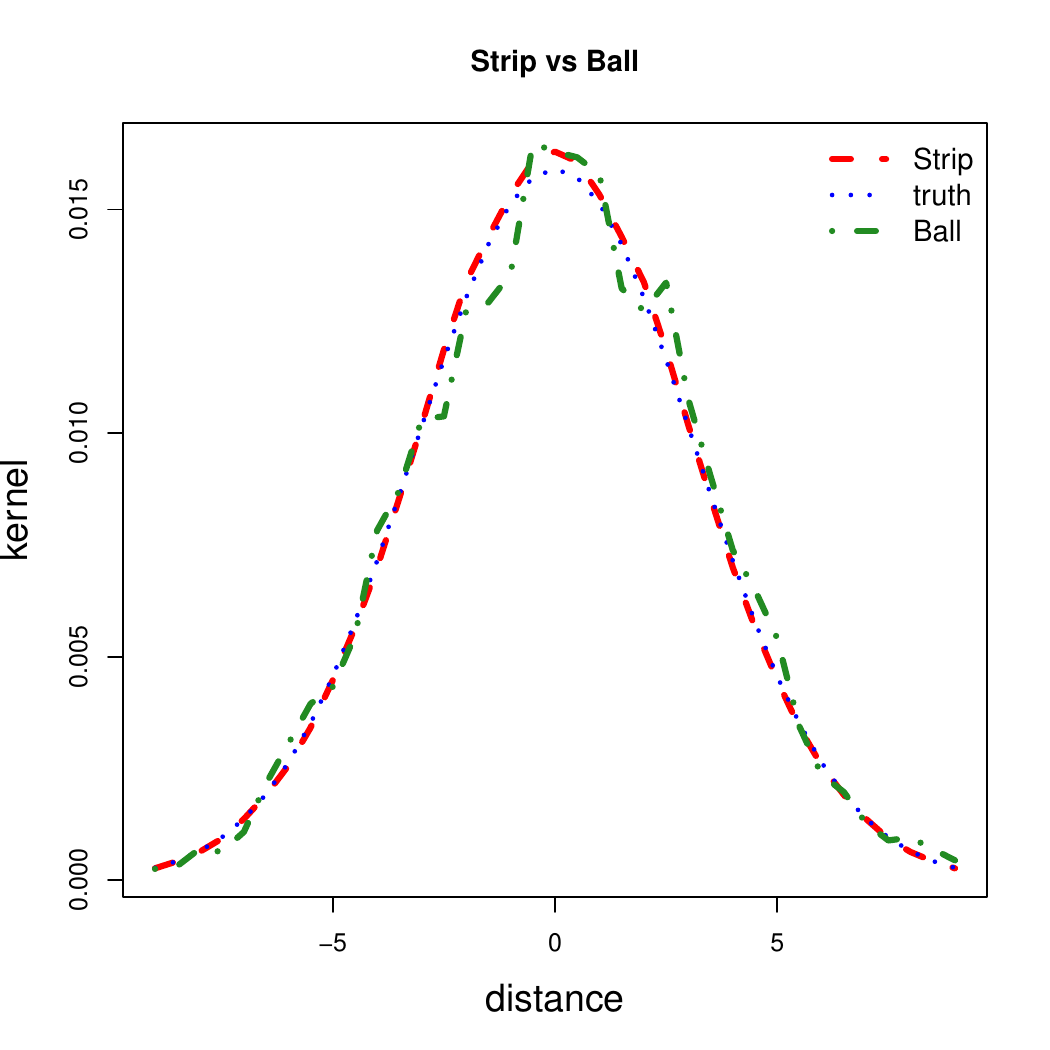} }
  \subfigure[$\mathbb{R}^3$]{ \label{ex:R3} \includegraphics[width=0.31\textwidth,height=0.31\textwidth]{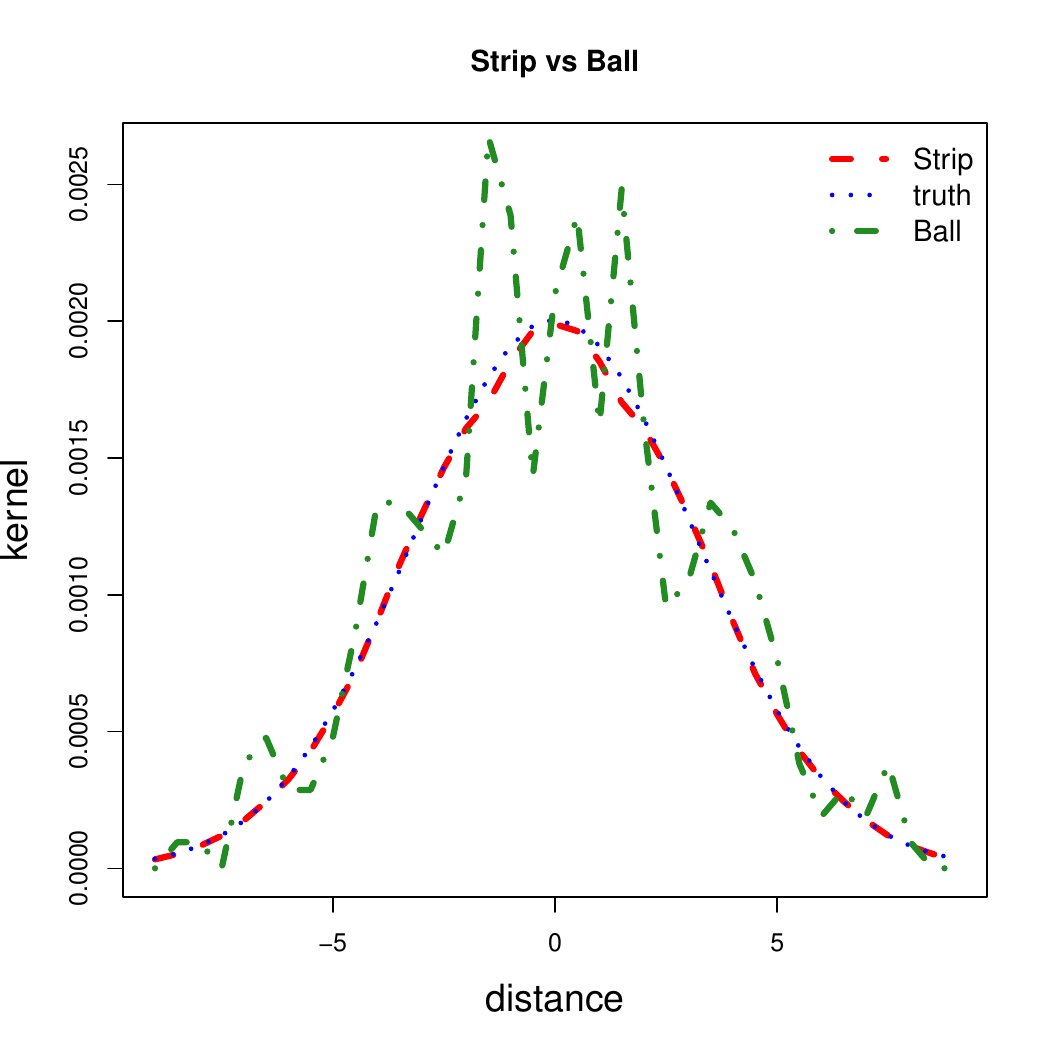} }
    \caption{  \label{fig:algorithm comparison} Comparison of the estimate of heat kernel on $\mathbb{R}^1$, $\mathbb{R}^2$ and $\mathbb{R}^3$ using strip and ball algorithms. The number of Brownian motion sample paths are 20000 for all three cases. The same window size and strip width are used. The true kernel values are plotted in dotted blue line while the estimations from ball algorithm are in green dot-dash line and strip method are in red dash line. }   
  \end{center}
\end{figure*}

\begin{theorem}\label{prop:compare algorithms}
Let $M$ be a Riemannian manifold of dimension $m$ and let $x_0,y_0\in M$ be fixed points with distance $d_0 = d(x_0,y_0)$. Suppose that $p_t^M(x_0,y)$ satisfies \eqref{eqn:distance dependence}. Then for any 
$a  >0$ there exists some $ \epsilon_0 > 0$ such that for each $0 < \epsilon \le \epsilon_0$, 
\begin{align}\label{eqn:compare algorithms}
\frac{\mathbb{P}\{B_{x_0}(t)\in S_{x_0}(d_0,\epsilon) \}}{ \mathbb{P}\{B_{x_0}(t) \in D(y_0,\epsilon)\} } &\approx \frac{\Vol(S_{x_0}(d_0,\epsilon))}{\Vol(D(y_0,\epsilon))}\\ &\ge
 \frac{\pi^{\frac{-m+1}{2}} (\frac{1}{2} m) !}{(1+a)^2(\frac{1}{2}) !} \epsilon^{-m+1}, 
\end{align}
where $B_{x_0}(t)$ is the point at time $t$ on a Brownian motion sample path starting from $x_0$ and $D(y_0,\epsilon)$ is the set of points on $M$ whose distance to $y_0$ is at most $\epsilon$.
\end{theorem}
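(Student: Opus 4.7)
The plan is to reduce the ratio of probabilities to a ratio of volumes using the distance-dependence hypothesis~\eqref{eqn:distance dependence}, and then to compare the volume of the codimension-one tube $S_{x_0}(d_0,\epsilon)$ with that of the geodesic ball $D(y_0,\epsilon)$ via the small-scale Riemannian expansions supplied by Appendix~D.

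First, I would rewrite each probability through \eqref{BMprob} and use \eqref{eqn:distance dependence} to write $p_t^M(x_0,y)=f(d(x_0,y))$ for some continuous function $f$ of a single variable. Both $S_{x_0}(d_0,\epsilon)$ and $D(y_0,\epsilon)$ lie inside the set $\{y\in M : |d(x_0,y)-d_0|\le \epsilon\}$: the first by definition, and the second by the triangle inequality $|d(x_0,y)-d_0|\le d(y,y_0)\le \epsilon$. Continuity of $f$ at $d_0$ therefore yields $f(r)=f(d_0)(1+o(1))$ uniformly on this set, so pulling $f(d_0)$ out of each heat-kernel integral gives
\[
\frac{\mathbb P\{B_{x_0}(t)\in S_{x_0}(d_0,\epsilon)\}}{\mathbb P\{B_{x_0}(t)\in D(y_0,\epsilon)\}}
= \bigl(1+o(1)\bigr)\,\frac{\Vol(S_{x_0}(d_0,\epsilon))}{\Vol(D(y_0,\epsilon))}\qquad(\epsilon\to 0),
\]
which is precisely the ``$\approx$'' in \eqref{eqn:compare algorithms} and reduces the theorem to a volume comparison.

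Next, I would produce leading-order asymptotics for each volume. For the ball, $\exp_{y_0}$ is a local diffeomorphism with unit Jacobian at the origin, so
\[
\Vol(D(y_0,\epsilon)) = \frac{\pi^{m/2}}{(m/2)!}\,\epsilon^{m} + O(\epsilon^{m+2}).
\]
For the strip, Lemma~D.2 tells us that $S_{x_0}(d_0)$ is a closed hypersurface of $M$ for the relevant $d_0$, and Theorem~D.3 (the tube volume formula) then gives
\[
\Vol(S_{x_0}(d_0,\epsilon)) = 2\epsilon\,\Vol(S_{x_0}(d_0)) + O(\epsilon^{2}).
\]
For the prescribed $a>0$, I would choose $\epsilon_0>0$ small enough that, for every $0<\epsilon\le \epsilon_0$ simultaneously, $\Vol(S_{x_0}(d_0,\epsilon))\ge 2\epsilon\,\Vol(S_{x_0}(d_0))/(1+a)$ and $\Vol(D(y_0,\epsilon))\le (1+a)\,\pi^{m/2}\epsilon^m/(m/2)!$.

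Finally, dividing these two bounds and simplifying with $(1/2)! = \Gamma(3/2)=\sqrt{\pi}/2$ produces
\[
\frac{\Vol(S_{x_0}(d_0,\epsilon))}{\Vol(D(y_0,\epsilon))}
\;\ge\; \frac{2\,(m/2)!}{(1+a)^{2}\pi^{m/2}}\,\Vol(S_{x_0}(d_0))\,\epsilon^{-m+1}
\;=\; \frac{\pi^{(1-m)/2}\,(m/2)!}{(1+a)^{2}\,(1/2)!}\,\Vol(S_{x_0}(d_0))\,\epsilon^{-m+1},
\]
which is exactly the stated lower bound (with the hypersurface volume $\Vol(S_{x_0}(d_0))$ absorbed into the constant). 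The main obstacle I anticipate is justifying the tube expansion on a general Riemannian manifold: one must verify that $\epsilon$ lies below the normal injectivity radius of the hypersurface $S_{x_0}(d_0)$ inside $M$ so that the normal exponential map is a diffeomorphism, and then extract the leading $2\epsilon\,\Vol(S_{x_0}(d_0))$ term from its Jacobian. This is precisely what Theorem~D.3 delivers, after which the remainder is a short continuity-plus-asymptotics argument.
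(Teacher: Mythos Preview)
Your proposal is correct and mirrors the paper's own (very terse) proof: reduce the probability ratio to a volume ratio via \eqref{BMprob} together with the distance-dependence hypothesis~\eqref{eqn:distance dependence}, then bound the volume ratio by applying the tube expansion of Theorem~D.3 to the codimension-one hypersurface $S_{x_0}(d_0)$ and to the point $\{y_0\}$. Your argument is in fact more careful than the paper's, and you are right that the honest lower bound carries the extra factor $\Vol(S_{x_0}(d_0))$, which the stated constant in \eqref{eqn:compare algorithms} silently suppresses.
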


\begin{proof}
Clearly \eqref{eqn:compare algorithms} follows from assumption \eqref{eqn:distance dependence} and the relation between the heat kernel and Brownian motion:
\[
\mathbb{P} \{B_x(t) \in D\} = \int_D p^M_t(x,y) dy.
\]
The inequality is a direct consequence of Theorem~\ref{thm:tube formula}.
\end{proof}

We remark that the lower bound in \eqref{eqn:compare algorithms} is not sharp. For example, in one dimensional case we have 
\[
\frac{\mathbb{P}\{B_{x_0}(t)\in S_{x_0}(d_0,\epsilon) \}}{ \mathbb{P}\{B_{x_0}(t) \in D(y_0,\epsilon)\} } \approx \frac{\Vol(S_{x_0}(d_0,\epsilon))}{\Vol(D(y_0,\epsilon))} \ge \frac{1}{(1+a)^2}.
\]
If $M = \mathbb{R}$ or $\mathbb{S}^1$, then it is straightforward to verify that 
\[
\frac{\mathbb{P}\{B_{x_0}(t)\in S_{x_0}(d_0,\epsilon) \}}{ \mathbb{P}\{B_{x_0}(t) \in D(y_0,\epsilon)\} }  = 2 > \frac{1}{(1+a )^2}.
\]
However, if $\epsilon$ is small, the lower bound in \eqref{eqn:compare algorithms} is already an exponential function in $m$. Hence we can conclude that Algorithm~\ref{alg:acc estimation of heat kernel} is exponentially more efficient than Algorithm~\ref{alg:estimation of heat kernel}. In other words, \eqref{eqn:compare algorithms} implies that if we fix the number of sampling Brownian paths, then the number of paths falling into the strip $S_{x_0}(d_0,\epsilon)$ is much more than the number of paths falling into the ball $D(y_0,\epsilon)$.

%To conclude this section, we remark that \cite{niu2018intrinsic} has proved the estimator of heat kernel in the ball algorithm is asymptotically unbiased and consistent. Following the similar idea, it is straight forward to prove that the estimator of heat kernel in the strip algorithm is also asymptotically unbiased and consistent.

%%%%%%%%%%%%%%%%%%%%%%%%%%%%%%%%%%%%%%%%%%%%%%%%%%%%%%%%%%%%%%%%%%%%%%%%%%%%%%%%%%%%%%%%%%%%%%%%%%%%%%%%%%%%%%%%%%%%%%%%%%%%
%%%%%%%%%%%%%%%%%%%%%%%%%%%%%%%%%%%%%%%%%%%%%%%%%%%%%%%%%%%%%%%%%%%%%%%%%%%%%%%%%%%%%%%%%%%%%%%%%%%%%%%%%%%%%%%%%%%%%%%%%%%%
\section{BM on matrix manifolds}\label{heatkernelmani}
%%%%%%%%%%%%%%%%%%%%%%%%%%%%%%%%%%%%%%%%%%%%%%%%%%%%%%%%%%%%%%%%%%%%%%%%%%%%%%%%%%%%%%%%%%%%%%%%%%%%%%%%%%%%%%%%%%%%%%%%%%%%
Although Algorithm~\ref{alg:simulation of BM} is applicable to an arbitrary Riemannian manifold, we may obtain a more efficient algorithm by further exploring the geometric structure of the underlying Riemannian manifold. This is the case for matrix manifolds such as Lie groups, Stiefel manifolds and Grassmannian manifolds. Readers who are not familiar with those manifolds can find basic facts about matrix Lie groups in Appendix~\ref{Liegroup} and about Stiefel manifolds and Grassmannian manifolds in Appendix~\ref{appendix:homogeneous spaces}. For each aforementioned manifold, we obtain one efficient algorithm. Due to the page limit, we record these algorithms in Appendix~\ref{appendix:algorithms} (cf. Algorithms~\ref{alg:simulation of BM on O(n)}--\ref{alg:Brownian paths on projective spaces}) and provide details of the derivation of these algorithms in Appendix~\ref{appendix:GP on matrix manifold}. Moreover, with these Algorithms in hand, we can apply Algorithms~\ref{alg:estimation of heat kernel} and \ref{alg:acc estimation of heat kernel} to efficiently estimate heat kernels on these manifolds.
%%%%%%%%%%%%%%%%%%%%%%%%%%%%%%%%%%%%%%%%%%%%%%%%%%%%%%%%%%%%%%%%%%%%%%%%%%%%%%%%%%%%%%%%%%%%%%%%%%%%%%%%%%%%%%%%%%%%%%%%%%%%
\section{Examples}\label{sec:examples}
We recall that by the ball method (Algorithm~\ref{alg:estimation of heat kernel}), one can estimate the heat kernel on any Riemannian manifold. Moreover, the estimation can be performed even more effectively by the strip method (Algorithm~\ref{alg:acc estimation of heat kernel}) if the manifold satisfies condition \eqref{eqn:distance dependence}. Now that the estimate of the heat kernel is obtained, it is straightforward to construct the intrinsic Gaussian process as introduced in Subsection~\ref{subsec:background}. In this work, we provide four examples to which the strip method applies. 

We conduct simulation studies for regression problems on torus knots and high-dimensional projective spaces and develop a binary classifier with real datasets from biology and medical image processing. In the sequel, the corresponding intrinsic Gaussian process is called the strip intrinsic Gaussian process (SiGP). The performance of the SiGP is compared with the extrinsic Gaussian process(exGP) \cite{extrinsicGP}, which is the euclidean Gaussian process using RBF kernel with embedding. Due to the page
limit, the results for projective spaces are presented in Appendix G.

%we apply the SiGP to classification problems with real data sets from biology and medical image processing 

%\subsection{Examples of Function Regression} \label{subsec:examples}

% To estimate intrinsic heat kernels in the two examples, we apply Algorithm~\ref{alg:Brownian paths on spheres} to simulate the BM on torus knots in Section~\ref{subsec:torus knots}. We also apply Algorithm~\ref{alg:Brownian paths on projective spaces} to simulate the BM on an eight dimensional projective spaces. The results for projective spaces are presented in Appendix ??}. %Section~\ref{subsec:projective spaces}.

\subsection{Torus knots} \label{subsec:torus knots}
The circle has a family of embeddings into $\mathbb{R}^3$, whose images are torus knots indexed by a pair of coprime positive integers $\left( p,q \right)$. The explicit embedding associated to $\left( p,q \right)$ can be found in standard textbooks in knots such as \cite{milnor1968singular,rolfsen2003knots,murasugi2007knot}. In particular, we considered regression problems on three types of torus knots $\left( p=2,q=3 \right)$, $\left( p=4,q=3 \right)$ and $\left( p=9,q=8 \right)$. As submanifolds of $\mathbb{R}^3$, torus knots of different types are twisted in dramatically different ways. For example, readers can find pictures of torus knots of types mentioned above in Figure \ref{fig:algorithm comparison}. We will see in Figure \ref{fig:algorithm comparison} that our intrinsic method is not affected by these twists while the extrinsic method does depend on them. The regression function on the torus knots is defined as
\begin{eqnarray} \label{regression eqn}
f(X) =X^{\tp} {\mathcal M} X + \epsilon,
\end{eqnarray}
where $\mathcal M$ is a fixed $2 \times 2$ real positive definite matrix, $X$ is a $2$-dimensional unit norm real vector and $\epsilon$ is a i.i.d noise. The true function on the three type of torus knots are plotted in Figure \ref{ex:knot23_true} \ref{ex:43R1} and \ref{ex:98R1}.

The heat kernel on Torus note is estimated by Algorithm~\ref{alg:Brownian paths on spheres}. The SiGP approach is compared with the extrinsic approach by embedding torus knots in $\mathbb R^3$. On the one hand, the predictive means of the GP with embedding which is equivalent to using the Euclidean GP with RBF kernel in $R^3$ is shown in Figure \ref{ex:R2}, \ref{ex:43R2} and \ref{ex:98R2}. It is clear that the prediction from the GP with embedding does not agree with the truth when the crossing number increases.
%as in knot(4,3) and knot(9,8). 
On the other hand, the predictive means of SiGP in Figure \ref{ex:R3}, \ref{ex:43R3} and \ref{ex:98R3}, which uses Algorithm~\ref{alg:estimation of heat kernel} and Algorithm~\ref{alg:Brownian paths on spheres}, recovers the true function very well. The numerical comparison is shown in Table \ref{sample-table}. The root mean-squared errors (RMSE) are computed for the true function and the predictive means over 10 datasets. The mean and standard deviation (values in brackets) of RMSEs are listed for both methods in different Torus knots. The prediction of SiGP is significantly better than the extrinsic GP.

\begin{table}[t]
\caption{Comparison of the RMSE of predictive means of two methods on Torus knots. Values in parentheses show the standard deviation.}
\label{sample-table}
\vskip 0.15in
\begin{center}
\begin{small}
\begin{sc}
\begin{tabular}{lcccr}
\hline
      & exGP & SiGP \\
\hline
knot(2,3)   & 0.103(0.014)& 0.072 (0.01)&  \\
knot(4,3) & 1.78(0.58)& 0.072 (0.01) & \\
knot(9,8)  & 2.29(0.75)& 0.072 (0.01) &  \\ %$\pm$ 0.7
\hline
\end{tabular}
\end{sc}
\end{small}
\end{center}
\vskip -0.1in
\end{table}

\begin{figure*}[h!]
%\figuresize{.58}
 \subfigure[$knot(2,3)$ Truth]{ \label{ex:knot23_true}  \includegraphics[width=0.25\textwidth,height=0.25\textwidth]{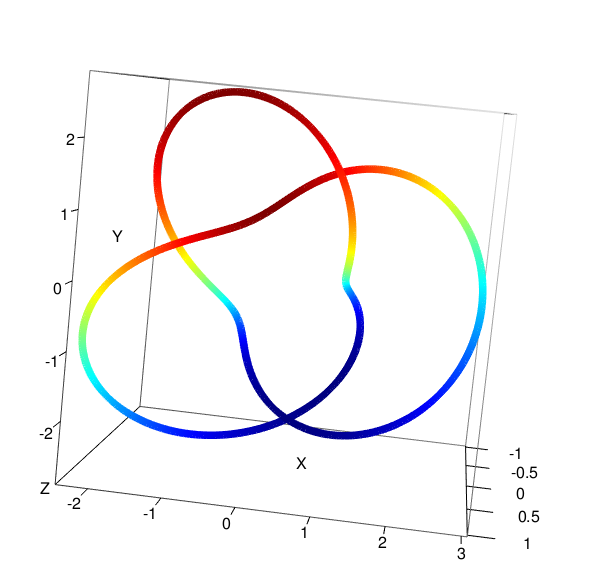} } 
 \subfigure[$knot(2,3)$ RBF]{\label{ex:R2} \includegraphics[width=0.25\textwidth,height=0.25\textwidth]{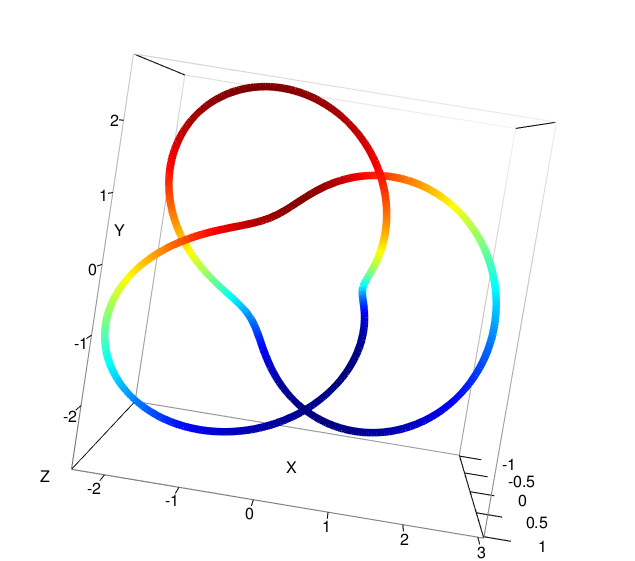} }
  \subfigure[$knot(2,3)$ Intrinsic]{ \label{ex:R3} \includegraphics[width=0.25\textwidth,height=0.25\textwidth]{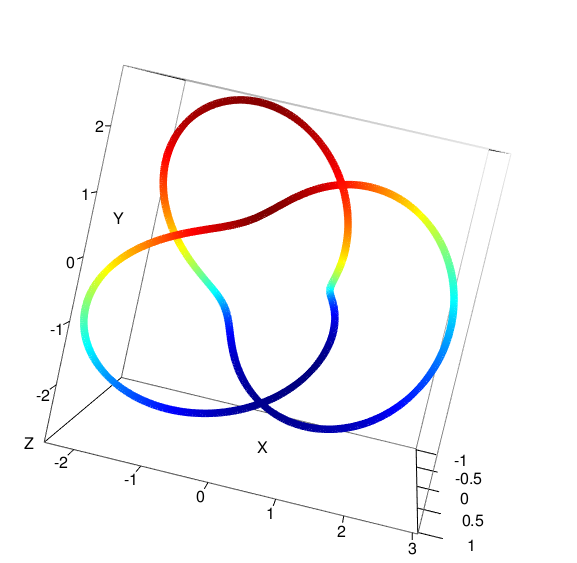} }
   \subfigure[$knot(4,3)$ Truth]{ \label{ex:43R1}  \includegraphics[width=0.25\textwidth,height=0.25\textwidth]{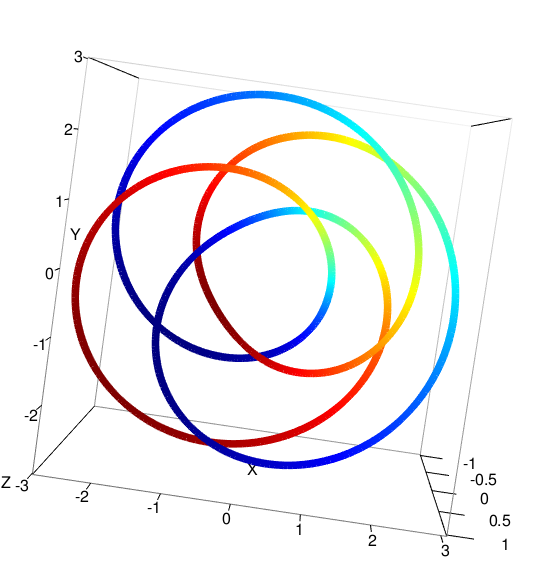} } 
 \subfigure[$knot(4,3)$ RBF]{\label{ex:43R2} \includegraphics[width=0.25\textwidth,height=0.25\textwidth]{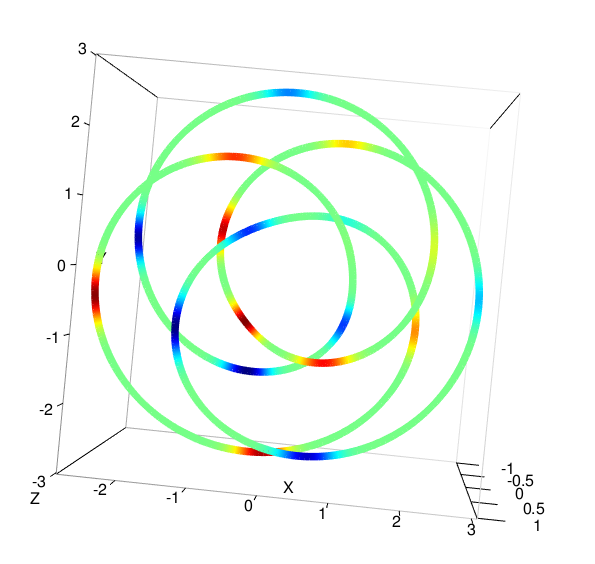} }
  \subfigure[$knot(4,3)$ Intrinsic]{ \label{ex:43R3} \includegraphics[width=0.25\textwidth,height=0.25\textwidth]{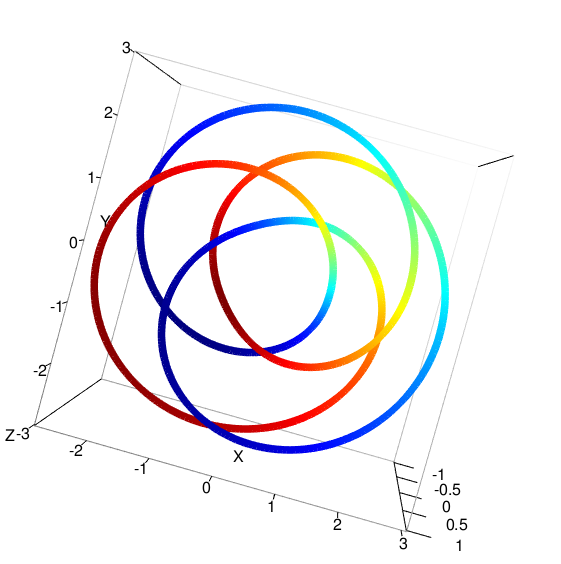} }
     \subfigure[$knot(9,8)$ Truth]{ \label{ex:98R1}  \includegraphics[width=0.3\textwidth,height=0.3\textwidth]{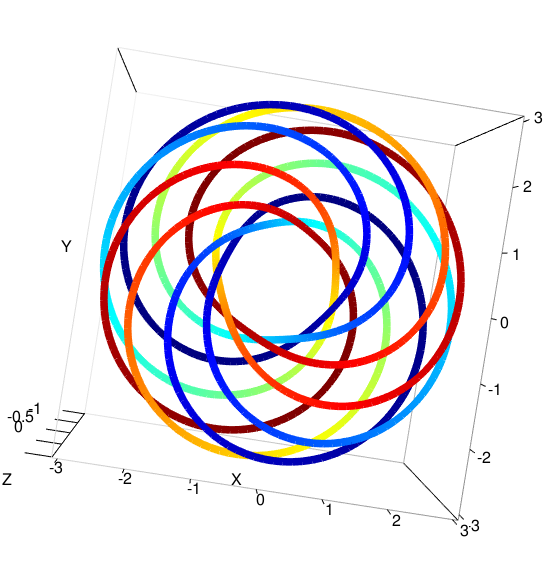} } 
 \subfigure[$knot(9,8)$ RBF]{\label{ex:98R2} \includegraphics[width=0.3\textwidth,height=0.3\textwidth]{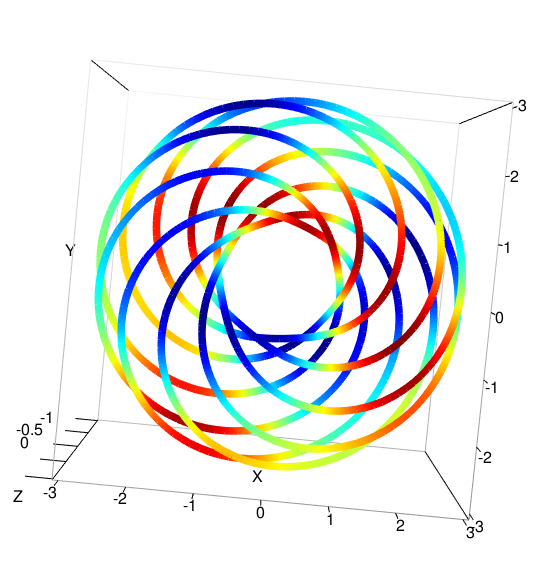} }
  \subfigure[$knot(9,8)$ Intrinsic]{ \label{ex:98R3} \includegraphics[width=0.3\textwidth,height=0.3\textwidth]{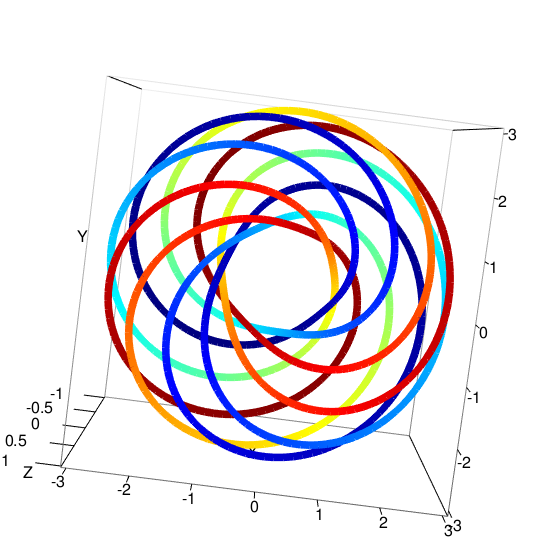} }
    \caption{  \label{fig:algorithm comparison} Comparison of the truth and prediction using extrinsic GP with RBF kernel embedded in $R^3$ and SiGP. The true value of the regression function is plotted in colour. }   
\end{figure*}

%\begin{table}[t]
%\caption{The comparison of the RMSE of predictive means of in-GP and Euclidean GP with different embeddings}
%\label{tab:p4}
%\vskip 0.15in
%\begin{center}
%\begin{small}
%\begin{sc}
%\begin{tabular}{lcccr}
%\hline
% %GP RBF kernel  & GP RBF kernel & Intrinsic GP \\
% GP RBF &  scaling & multiply matrix  & Intrinsic GP \\ 
% %RBF kernel     &  RBF kernel                  & GP \\ 
%\hline
%  2.79(1$e$-3) & ?1.39(0.02) & ?1.2(0.22)& 1.31(0.05)  \\
%\hline
%\end{tabular}
%\end{sc}
%\end{small}
%\end{center}
%\vskip -0.1in
%\end{table}

\subsection{Examples with real data sets}
%In this subsection, we apply the SiGP to classification problems with real data sets from biology and medical image processing. For each problem, we choose a specific underlying manifolds based on features of data sets.
%The projective space is an important classes of landmark-based shape spaces with applications in medical image processing and biology.
\subsubsection{Classification of gorilla skulls}
The data set discussed in this subsection is contained in \citet{dryden2016}, which consists of $59$ planar images of gorilla skulls with landmarks. 
There are $29$ images for male gorillas and $30$ images for female gorillas. 
The task is to classify the genders of gorillas based on the location of the four landmarks on images.

We apply our SiGP approach to this classification problem. We notice that in this classification problem, only shapes of skulls are concerned. This indicates that our classification results should be invariant to translations and rotations of these images. The manifold we considered here is the projective space $\mathbb{P}_{\mathbb{C}}^{2}$. The heat kernel of $\mathbb{P}_{\mathbb{C}}^{2}$ is estimated by Algorithm \ref{alg:Brownian paths on projective spaces}. Detailed discussion is shown in Appendix G. %Each image is represented by 4 landmarks. Let $x_i$ be $i_{th}$ image, and $x_i = ( c_1 , c_2, c_3, c_4 ) $ with $c_1 \cdots c_4 \in \mathbb{R}^2$ be 4 landmarks.
%Based on the above observations, a binary intrinsic GP classification model is developed for those invariant features. 
We define $y_i \in \{ 0 ,1 \}$ where 0 represents female and 1 for male. The intrinsic GP classifier is defined as:
\begin{align} \label{eq:gorilla}
y_k &\sim Bernoulli(\pi_k),  \nonumber 
\ \ \pi_k = \Phi(f({[\overline{x}_k]})),  \nonumber \\
f(\cdot) &\sim Intrinsic \ GP(0 , p^{ \mathbb{P}_{\mathbb{C}}^{2}} )
\end{align}
where $\Phi$ is the standard normal cdf and $p^{ \mathbb{P}_{\mathbb{C}}^{2}}$ is the estimated heat kernel of the projective space $\mathbb{P}_{\mathbb{C}}^{2}$. The likelihood is approximated using expectation propogation as in \cite{hernandez16}.

From the 59 skull images, we create 10 random train-test slits with 50 images for training and 9 images for testing. The performance is measured in terms of cross entropy and the area under the Receiver Operating Characteristics curve (AUC-ROC).
The mean and standard deviation of the two metrics across 10 splits are summarised in Table \ref{tab:p2class-summary}. 
%
%Roc-aus scores are the area under the ROC curve of the prediction. The cross entropy is computed as
%\begin{align*}
%\frac{1}{N}\sum_{i=1}^N -\left(y_i \log(P(y_i=1)) + (1-y_i) \log(P(y_i=0) )   \right)
%\end{align*} 
%where N is the number of the testing samples. 
%For a naive classifier with $P(y_i=0)=P(y_i=1)=0.5$, the corresponding cross entropy is 0.69 and the roc-aus score is 0.5.
%In the most ideal case, the cross entropy is 0 and the roc-aus score is 1.   We compare our method with a few linear and non-linear binary classification baselines. 
%
The results of SiGP are compared with a extrinsic GP classifier, SVM and LR(logistic regression). 
The RBF kernel with the Euclidean distance on $\mathbb{R}^8$ of the landmarks are used in the extrinsic GP classifier. 
SVM and LR have higher values of cross entropy than SiGP as a result of the classification probabilities being close to 0.5. 
For complex manifolds such as projective spaces, the naive representation of the data without properly incorporating the underlying geometry, lead to a  posterior estimate of the latent function that is close to the prior mean. The AUC-ROC scores also indicate that the SiGP almost perfectly identifies the gender.

%and all these images are successfully classified with the intrinsic GP classifier. The classification probabilities are provided in Table \ref{tab:p2class}. 

%\begin{table}[t]
%\caption{Landmark shape classification of gender based on gorilla skull shape}
%\label{tab:p2class}
%\vskip 0.15in
%\begin{center}
%\begin{small}
%\begin{sc}
%\begin{tabular}{lccccccccr}
%\hline
%      & female & female & female & female  & female& male & male & male  & male  \\
%\hline
%In GP  & 0.01& 0.02&0.25&0.15&0.14&0.97&0.75&0.62&0.71  \\
%naive GP &  0.5& 0.5 &0.5 &0.5 &0.5 &0.5 &0.5 &0.5 &0.5 \\
%Neural Network &  0.5& 0.5 &0.5 &0.5 &0.5 &0.5 &0.5 &0.5 &0.5 \\
%SVM &  0.5& 0.5 &0.5 &0.5 &0.5 &0.5 &0.5 &0.5 &0.5 \\
%\hline
%\end{tabular}
%\end{sc}
%\end{small}
%\end{center}
%\vskip -0.1in
%\end{table}

\begin{table}[t]
\caption{Gorilla gender classification based on gorilla skull landmarks. The values are the means of ten random train-test splits and the values in parentheses are the standard deviation. The 1st row shows the cross entropy. The 2nd row shows the AUC-ROC. }
\label{tab:p2class-summary}
\vskip 0.15in
\begin{center}
\begin{small}
\begin{sc}
\begin{tabular}{lccccccccr}
\hline
      & SiGP  & exGP  & SVM &LR\\
\hline
  & 0.15(0.05)& 0.58(0.21) &0.70(0.004)&3.8(1.2)  \\
 &  0.99(0.01) & 0.83(0.14) &0.52(0.06)&0.92(0.07)  \\
\hline
\end{tabular}
\end{sc}
\end{small}
\end{center}
\vskip -0.1in
\end{table}

\subsubsection{Classification of diffusion tensor images} 
Diffusion tensor imaging (DTI) is a technique used to study white matter tractography in the brain. It consists of magnetic resonance images obtained by the diffusion of water molecules in biological tissues. We consider the DTI data set provided in \citet{bhattacharya2017}, which contains data for $46$ subjects with $28$ HIV+ subjects and $18$ healthy controls. For each subject, the data is a collection of $75$ positive definite matrices of size $3\times 3$, obtained by measuring at $75$ sites along a fixed fiber tract of the splenium of the corpus callosum. The goal is to classify HIV infection for each site and determine which sites are most susceptible. We compare intrinsic GP with a few linear and non-linear baselines in terms of binary classification performance for individual sites. A natural underlying manifold for such a classifier is the $6$-dimensional manifold $\mathbb{S}^3_{++}$ consisting of all $3\times 3$ positive definite matrices. However, a further investigation of the data set implies that those $3\times 3$ positive definite matrices correspond to points in $\mathbb{S}^2 \times \mathbb{S}^1 \times \mathbb{R}_{+,>}^3$, which is the underlying manifold we use to build our binary classifiers. More details are show in Appendix I.
%one can model the problem on a less obvious but more effective model. 

We use 35 subjects (15 healthy, 20 HIV+) for training and 11 subjects for testing. We created 10 different batches of training and testing data by randomly sampling with replacement from the 46 subjects. 
The classification results are summarised as the cross entropy in Table~\ref{tab:DTIentropy}.The AUC-ROC scores are also presented in  Table~\ref{tab:DTIroc} in Appendix I. The top 5 most sensitive sites indexed by its location on the brain are identified by the SiGP classifier. The results of SiGP classifier are compared with the extrinsic GP (RBF kernel with $\R^7$ distance), SVM classifier and logistic regression. For a naive classifier with $P(y_i=0)=P(y_i=1)=0.5$, the corresponding cross entropy is 0.69. It is clear that the SiGP classifier significantly outperforms all other methods in both measures. Moreover, classification results for all the $75$ sites are summarised in Table~\ref{tab:DTIallarc} in Appendix \ref{appendix:dti}.
%, which might be of independent interest. It is worthy to notice that in this comparison, the outperformance of SiGP against other approaches, though it is still visible, is not as significant as that for sensitive sites. This phenomenon is caused by the fact that most of the $75$ sites are not sensitive to HIV\footnote{In fact, determining sensitive sites is exactly the purpose of the DTI technique.} and hence our SiGP classifier tends to a naive classifier when we take the average across all the sites.

\begin{table}[t]
\caption{Cross entropy of the diffusion tensor imaging data classification. Values in parentheses show the standard deviation. %For a naive classifier with $P(y_i=0)=P(y_i=1)=0.5$, the corresponding cross entropy is 0.69.
}  %  0, 8, 3, 5, 6
\label{tab:DTIentropy}
\vskip 0.15in
\begin{center}
\begin{small}
\begin{sc}
\begin{tabular}{lccccccccr}
\hline
    & SiGP & exGP &SVM  &LR \\
\hline
site13  & 0.48(0.08)& 0.68(0.02) & 0.63(0.11)&0.65(0.03) \\
site55 &  0.51(0.06) & 0.72(0.05) & 0.59(0.06)& 0.64(0.03) \\ 
site29 &  0.52(0.06) &   0.69(0.01) & 0.64(0.11)& 0.66(0.03) \\
site39 & 0.52(0.07) & 0.69(0.01)& 0.64(0.16)& 0.65(0.03) \\
site50 &  0.54(0.04) & 0.69(0.02)& 0.62(0.10)&0.65(0.03) \\
\hline
\end{tabular}
\end{sc}
\end{small}
\end{center}
\vskip -0.1in
\end{table}

%site13  & 0.83(0.12)& 0.60(0.18) & 0.50(0.07)& 0.53(0.08) & 0.51(0.06)\\
%site55 &  0.83(0.14) & 0.44(0.14) &0.49(0.06) & 0.49(0.02) &0.48(0.07) \\ 
%site29 &  0.83(0.08) &  0.56(0.15) & 0.5(0)&0.5(0)&0.52(0.04)\\
%site39 & 0.82(0.14) &0.57(0.25) &0.53(0.08)&0.51(0.05)&0.49(0.03)\\
%site50 &  0.87(0.08) &0.48(0.21)&0.55(0.09)& 0.5(0)&0.5(0.05)\\
%\begin{table}[t]
%\caption{Diffusion tensor images}
%\label{tab:DTI}
%\vskip 0.15in
%\begin{center}
%\begin{small}
%\begin{sc}
%\begin{tabular}{lcccccccccr}
%\hline
% arc length index     & 2 & 6 & 15 & 33  & 25 & 45  & 9  & 29 & 21  & 59  \\
%\hline
%In GP  & -0.46(1) & -0.47(0.92)&-0.47(0.92)&-0.47(0.83)&-0.49(0.75)&-0.5(0.92)&-0.5(0.83)&-0.51(0.96)&-0.54(0.96) \\
%naive GP &  0.5& 0.5 &0.5 &0.5 &0.5 &0.5 &0.5 &0.5 &0.5 \\
%Neural Network &  0.5& 0.5 &0.5 &0.5 &0.5 &0.5 &0.5 &0.5 &0.5 \\
%SVM &  0.5& 0.5 &0.5 &0.5 &0.5 &0.5 &0.5 &0.5 &0.5 \\
%\hline
%\end{tabular}
%\end{sc}
%\end{small}
%\end{center}
%\vskip -0.1in
%\end{table}

%%%%%%%%%%%%%%%%%%%%%%%%%%%%%%%%%%%%%%%%%%%%%%%%%%%%%%%%%%%%%%%%%%%%%%%%%%%%%%%%%%%%%%%%%%%%%%%%%%%%%%%%%%%%%%%%%%%%%%%%%%%%

%%%%%%%%%%%%%%%%%%%%%%%%%%%%%%%%%%%%%%%%%%%%%%%%%%%%%%%%%%%%%%%%%%%%%%%%%%%%%%%%%%%%%%%%%%%%%%%%%%%%%%%%%%%%%%%%%%%%%%%%%%%%
\section{Conclusion}
In this work, we propose a novel approach of constructing the strip intrinsic Gaussian process on manifolds such as orthogonal groups, unitary groups, Stiefel manifolds and Grassmannian manifolds. The heat kernel of a manifold is used as the covariance function of the intrinsic Gaussian process, which can be estimated as the transition density of the Brownian motion on the manifold. The ball algorithm and the strip algorithm are developed to estimate the transition density of the Brownian motion. While the ball algorithm is applicable to a more general class of manifolds, the strip algorithm is proven to be more efficient, both mathematically and experimentally. We also compare the performance of the extrinsic method proposed in \cite{extrinsicGP} with that of our novel method on torus knots and high dimensional projective spaces.
The comparison in section \ref{sec:examples} indicates that the strip intrinsic Gaussian process achieves a significant improvement over the extrinsic Gaussian process. On the one hand, although there is an abundant interest in optimization on manifolds \cite{absil2009optimization,boumal2014manopt,ring2012optimization,vandereycken2013low}, we notice that most algorithms are based on the gradient descend method. On the other hand, the Bayesian optimization on Euclidean spaces is proven to be extremely effective in numerous scenarios. In future, we would like to combine these approaches with the intrinsic Gaussian process to perform the Bayesian optimisation on manifold.

% In the unusual situation where you want a paper to appear in the
% references without citing it in the main text, use \nocite

\nocite{langley00}

\bibliography{example_paper}
\bibliographystyle{icml2024}

%%%%%%%%%%%%%%%%%%%%%%%%%%%%%%%%%%%%%%%%%%%%%%%%%%%%%%%%%%%%%%%%%%%%%%%%%%%%%%%
%%%%%%%%%%%%%%%%%%%%%%%%%%%%%%%%%%%%%%%%%%%%%%%%%%%%%%%%%%%%%%%%%%%%%%%%%%%%%%%
% APPENDIX
%%%%%%%%%%%%%%%%%%%%%%%%%%%%%%%%%%%%%%%%%%%%%%%%%%%%%%%%%%%%%%%%%%%%%%%%%%%%%%%
%%%%%%%%%%%%%%%%%%%%%%%%%%%%%%%%%%%%%%%%%%%%%%%%%%%%%%%%%%%%%%%%%%%%%%%%%%%%%%%
\newpage
\appendix
\onecolumn
\section{ Matrix Lie groups}
\label{Liegroup}
In this subsection, we will briefly review some basic facts about matrix Lie groups which we will use in this paper. Interested readers are refereed to standard resources on differential geometry such as \cite{helgason2001differential},\cite{lee2012introduction} and \cite{warner2013foundations}.

Let $\mathbb{F}$ be either real or complex number field. We denote by $\mathbb{F}^{n\times n}$ the space of all $n\times n$ matrices over $\mathbb{F}$ and we denote by $\GL(n,\mathbb{F}) \subseteq \mathbb{F}^{n\times n}$ the group consisting of all invertible $n\times n$ matrices. Let $G$ be a Lie subgroup of $\GL(n,\mathbb{F})$ and let $\mathfrak{g}$ be its Lie algebra. For each $A\in G$, the \emph{tangent space} of $G$ at $A$ is 
\begin{equation}\label{eqn:tagent space Lie group}
T_A G = A \mathfrak{g}.
\end{equation}
Hence we may write a tangent vector of $G$ at $A$ as $AX$, where $X\in \mathfrak{g}$. 

There is a canonical Riemannian metric $g^c$ on $G$. To be more precise, we have a positive definite bilinear form $g^c_A$ on $T_AG$ for each $A\in G$ defined by
\begin{equation}\label{eqn:metric Lie group}
g^c_A(AX,AY) = \tr(X^\ast Y),\quad X,Y\in \mathfrak{g},
\end{equation}
and $g^c_A$ varies smoothly with respect to $A$. For a given $X\in \mathfrak{g}$, we denote by $\lVert X \rVert$ the \emph{norm} of $X$ with respect to $g^c$, i.e., 
\begin{equation}
\lVert X \rVert := \sqrt{\tr(X^{\ast} X)}.
\end{equation}
It is clear that the metric $g^c$ on $G$ is bi-invariant if $G = \O(n),\SO(n),\U(n)$ or $\SU(n)$ where
\begin{enumerate}
\item general linear group: $\GL(n,\mathbb{R}):= \left\lbrace
A\in \mathbb{R}^{n\times n}:  \det (A) \ne 0  \right\rbrace$;
\item special linear group: $\SL(n,\mathbb{R}) := \left\lbrace
A\in \GL(n,\mathbb{R}): \det (A) = 1  \right\rbrace$;
\item orthogonal group: $\O(n) := \left\lbrace
A\in \GL(n,\mathbb{R}): A^\tp A = \I_n
 \right\rbrace$;
\item special orthogonal group: $\SO(n) = \O(n) \cap \SL(n,\mathbb{R})$;
\item unitary group: $\U(n) := \left\lbrace
A\in \GL(n,\mathbb{C}): A^\ast A = \I_n
 \right\rbrace$;
\item special unitary group: $\SU(n) = \U(n) \cap \SL(n,\mathbb{C})$.
\end{enumerate}
For instance, $\SO(2)$ consists of all $2\times 2$ matrices of the form 
\[
A = \begin{bmatrix}
\cos \theta & \sin \theta \\
-\sin \theta & \cos \theta
\end{bmatrix}, \quad \theta \in [0,2 \pi ).
\]
Correspondingly, the Lie algebra $\mathfrak{o}(2)$, which is defined to be the tangent space of $\SO(2)$ at the identity $\I_2\in \SO(2)$, consists of all $2\times 2$ skew symmetric matrices, i.e., 
\[
X = \begin{bmatrix}
0 & -a \\
a & 0
\end{bmatrix},\quad a\in \mathbb{R}.
\]

We summarize some important properties of these matrix Lie groups in Table \ref{tab:properties of Lie groups}.

\begin{table}[h!]
\def~{\hphantom{0}}
\begin{center}
\caption{topological properties of matrix Lie groups}{%
\begin{tabular}{lcccc}
 \\
                           & connectedness &  compactness & Lie algebra & dimension   \\ \hline
$\O(n)$ & no & yes & $\mathfrak{so}(n)$ & $n(n-1)/2$ \\\hline
$\SO(n)$ & yes & yes & $\mathfrak{so}(n)$ & $n(n-1)/2$  \\\hline
$\U(n)$ & yes & yes & $\mathfrak{u}(n)$ & $n^2$ \\\hline
$\SU(n)$ & yes & yes & $\mathfrak{su}(n)$ & $n^2 - 1$ \\\hline
\end{tabular}}
\label{tab:properties of Lie groups}
\end{center}
\end{table}

 Here 
%$\mathfrak{sl}(n,\mathbb{F})$ consists of all traceless $n\times n$ matrices over $\mathbb{F}$; 
$\mathfrak{so}(n)$ is the space of all skew-symmetric $n\times n$ real matrices; $\mathfrak{u}(n)$ is the space of all skew-Hermitian $n\times n$ complex matrices and $\mathfrak{su}(n)$ consists of all traceless skew-Hermitian $n\times n$ complex matrices.
 From now on, we assume that $G = \O(n),\SO(n),\U(n)$ or $\SU(n)$. With the canonical metric $g^c$, a \emph{geodesic curve} $\gamma(t)$ passing through $A\in G$ with the tangent direction $AX\in T_A \mathfrak{g}$ is given by
\begin{equation}\label{eqn:geodesic Lie group}
\gamma(t):= A \exp(t X) =A \sum_{j=0}^\infty \frac{(tX)^k}{k!}.
\end{equation}
The length of $\gamma(t) = A \exp(t X)$ is calculate by 
\begin{equation}
L(\gamma):= \int_0^t \sqrt{g_{\gamma(s)}(\dot{\gamma}(s),\dot{\gamma}(s))} ds =t  \lVert X \rVert .
\end{equation}

\begin{lemma}\label{lemma:geodesic distance Lie group}
Let $A,B$ be two points on $G = \SO(n)$ or $\SU(n)$ and let $\gamma(t)$ be the geodesic connecting $A$ and $B$. The geodesic distance between $A$ and $B$ is 
\begin{equation}\label{eqn:geodesic distance on Lie group}
d(A,B)  =\sqrt{\sum_{j=1}^n \lvert \log (\lambda_j)} \rvert^2, 
\end{equation}
where $\lambda_1,\dots, \lambda_n$ are eigenvalues of $A^\ast B$ and $\log(\lambda)$ is the principal logarithm of $\lambda\in \mathbb{C}$.
\end{lemma}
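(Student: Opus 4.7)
The plan is to exploit the bi-invariance of $g^c$ to reduce the problem to computing the distance from the identity, and then to express this distance as the minimum Frobenius norm over all preimages of $A^{\ast}B$ under the matrix exponential.

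First, since $g^c$ is bi-invariant on $G = \SO(n)$ or $\SU(n)$, left multiplication by $A^{\ast}$ is an isometry, so $d(A,B) = d(\I_n, A^{\ast}B)$. Set $C := A^{\ast}B$. For a compact connected Lie group equipped with a bi-invariant metric, every geodesic through $\I_n$ is a one-parameter subgroup $t \mapsto \exp(tX)$ with $X \in \mathfrak{g}$, and by the length formula displayed just before the lemma the segment $t\in[0,1]$ has length $\lVert X \rVert$. Consequently
\[
d(\I_n, C) \;=\; \min\bigl\{\, \lVert X \rVert : X \in \mathfrak{g},\ \exp(X) = C \,\bigr\}.
\]

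I would construct an explicit candidate minimizer by diagonalisation. Because $C$ is normal, there is a unitary $U$ with $C = U\,\mathrm{diag}(\lambda_1,\dots,\lambda_n)\,U^{\ast}$ and $|\lambda_j|=1$. Writing $\lambda_j = e^{i\theta_j}$ with $\theta_j \in (-\pi,\pi]$ the principal argument, set $X_0 := U\,\mathrm{diag}(i\theta_1,\dots,i\theta_n)\,U^{\ast}$. Then $\exp(X_0) = C$ and
\[
\lVert X_0 \rVert^2 \;=\; \tr(X_0^{\ast} X_0) \;=\; \sum_{j=1}^n \theta_j^2 \;=\; \sum_{j=1}^n |\log \lambda_j|^2 .
\]
A small check is required to guarantee $X_0\in\mathfrak{g}$: for $\SU(n)$, tracelessness of $X_0$ follows from $\det C = 1$ (after swapping a pair of principal values by $\mp 2\pi$ if necessary, which leaves the norm unchanged); for $\SO(n)$, one chooses $U$ so that complex conjugate eigenvalues of $C$ are paired, making $X_0$ real skew-symmetric via the usual $2\times 2$ block reduction.

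The main step, and the principal obstacle, is minimality. Any $X \in \mathfrak{g}$ with $\exp(X) = C$ preserves each eigenspace of $C$, and the spectral mapping theorem forces the eigenvalues of $X$ to be $i(\theta_j + 2\pi k_j)$ for some integers $k_j$, up to a permutation. Since $|\theta_j| \le \pi$, one has $(\theta_j + 2\pi k_j)^2 \geq \theta_j^2$ for every $k_j \in \mathbb{Z}$, and therefore $\lVert X \rVert^2 \geq \lVert X_0 \rVert^2$. The subtle case is that of repeated eigenvalues of $C$, where different preimages may mix eigenvectors; this is handled by decomposing $\mathbb{F}^n$ into the common invariant subspaces of $X$ and $C$ and applying the diagonal bound block by block. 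Taking square roots gives $d(A,B) = d(\I_n, C) = \sqrt{\sum_{j=1}^n |\log \lambda_j|^2}$, as claimed.
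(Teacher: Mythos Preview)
Your approach is essentially the same as the paper's: both reduce to $A^{\ast}B=\exp(X)$ and read off $\lVert X\rVert$ from a spectral decomposition. The paper diagonalises $X$ directly (writing $X=D^{\ast}\Sigma D$ with purely imaginary diagonal $\Sigma$) and then identifies $\lVert X\rVert^2=\sum|\log\lambda_j|^2$; you instead diagonalise $C=A^{\ast}B$ first and build $X_0$ from the principal arguments. These are equivalent manoeuvres. Where you go further is in treating minimality explicitly---the paper simply takes ``the'' geodesic and computes its length, never arguing that the principal logarithm yields the shortest preimage, whereas you give the $(\theta_j+2\pi k_j)^2\ge\theta_j^2$ argument. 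That is a genuine addition, not a deviation.

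One small wrinkle: your parenthetical for $\SU(n)$, that tracelessness can be arranged by ``swapping a pair of principal values by $\mp2\pi$\dots\ which leaves the norm unchanged'', is not right as stated. Shifting $\theta_j\to\theta_j+2\pi$ and $\theta_k\to\theta_k-2\pi$ neither zeroes the trace (it leaves the sum fixed) nor preserves $\sum\theta_j^2$ in general. What actually happens is that $\det C=1$ forces $\sum\theta_j\in2\pi\mathbb{Z}$, and generically (no eigenvalue equal to $-1$) this integer is zero, so no adjustment is needed; at the cut locus the formula with principal logs can fail to be the distance. The paper sidesteps this entirely by saying only that ``the proof for $\SU(n)$ is similar'', so you are not missing anything the paper supplies, but you should either drop the parenthetical or state the generic hypothesis.
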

\begin{proof}
We only prove for $\SO(n)$ as the proof for $\SU(n)$ is similar. The Lie algebra of $\SO(n)$ is 
\begin{equation}\label{eqn:lie algebra o(n)}
\mathfrak{so}(n):= \lbrace
X\in \mathbb{R}^{n\times n}: X^\tp + X = 0
\rbrace.
\end{equation} 
According to \eqref{eqn:geodesic Lie group}, the geodesic passing through $A\in \SO(n)$ with the tangent direction $AX\in T_A\SO(n)$ is $\gamma(t)  = A \exp(tX)$. If $\gamma(1) = B$, then $A^\ast B = \exp(X)$. Since $X$ is a skew-symmetric matrix, the spectral theorem implies that 
\begin{equation}\label{eqn:svd skew-symmetric}
X = D^{\ast} \Sigma D,
\end{equation}
where $D\in \U(n)$ and $\Sigma$ is a diagonal matrix whose diagonal entries are eigenvalues of $X$. Explicitly, we have 
\[
\Sigma = \begin{bmatrix}
i\mu_1 &0 &  \cdots &  0 & 0  & 0 &\cdots &0 \\
0 & -i\mu_1  & \cdots  & 0 & 0 & 0 & \cdots &0\\
\vdots & \vdots & \ddots & \vdots & \vdots & \vdots & \ddots  &\vdots \\
0 & 0 & \cdots & i\mu_r  & 0  & 0 & \cdots  & 0  \\
0 & 0 & \cdots & 0 & -i\mu_r & 0 &  \cdots & 0 \\
0 & 0 & \cdots & 0 & 0 & 0&  \cdots & 0\\
\vdots & \vdots & \ddots & \vdots & \vdots & \vdots&  \ddots & \vdots\\
0 & 0 & \cdots & 0 & 0 & 0&  \cdots & 0\\
  \end{bmatrix},
\]
where $\mu_1,\dots,\mu_r$ are positive real numbers. Hence we may write the geodesic $\gamma(t)$ as
\begin{equation}\label{eqn:geodesic O(n)}
\gamma(t)  = A U^{\ast} \exp(t\Sigma) U,
\end{equation}
Moreover, we notice that $X$ is in fact a real matrix, the norm of $X = D^{\ast} \Sigma D$ is 
\begin{equation}\label{eqn:norm O(n)}
\lVert X \rVert_{\mathbb{R}} = \lVert \Sigma \lVert_{\mathbb{C}} = \sqrt{2\sum_{j=1}^r \mu_j^2} = \sqrt{\sum_{j=1}^n \lvert \log(\lambda_j) \rvert^2},
\end{equation}
since $\lVert X \rVert_{\mathbb{R}}^2 = \tr(X^\tp X) = \tr(X^\ast X) = \lVert \Sigma \rVert_{\mathbb{C}}^2$.
\end{proof}

\begin{remark}
Let $\mathbb{F}$ be real or complex number field and let $X\in \mathbb{F}^{n\times n}$ be such that $X^\ast  + X = 0$. Suppose $X = U^\ast \Sigma U$ is an eigendecomposition of $X$, where $U\in \U(n)$ and $\Sigma$ is a diagonal matrix whose diagonal entries are eigenvalues $\lambda_1,\dots, \lambda_n$ of $X$. Then the exponential $\exp(X)$ can be efficiently computed by 
\begin{equation}\label{eqn:exponential on U(n)}
\exp(X) = U^\ast D U
\end{equation}
where $D$ is the diagonal matrix whose diagonal entries are $e^{\lambda_1},\dots, e^{\lambda_n}$. 
If moreover we have $\tr(X) = 0$, then nonzero eigenvalues of $X$ must be of the form $\pm i \mu_1,\dots, \pi i \mu_r$ where $2r \le n$. Hence $\exp(X)$ is calculated by  
\begin{equation}\label{eqn:exponential on so}
\exp(X) = U^\ast \begin{bmatrix}
e^{i\mu_1} & 0 & \cdots & 0 & 0 & 0 & \cdots & 0 \\
0                          & e^{-i\mu_1} & \cdots  & 0 & 0 & 0 & \cdots & 0 \\
\vdots & \vdots & \ddots & \vdots & \vdots & \vdots &  \ddots & \vdots \\
0 & 0 & \cdots & e^{i\mu_r} & 0 & 0 & \cdots & 0 \\
0 & 0 & \cdots & 0 & e^{-i\mu_r} & 0 & \cdots & 0 \\
0 & 0 & \cdots & 0 & 0 &  1 & \cdots & 0 \\
\vdots & \vdots & \ddots  & \vdots & \vdots & \vdots & \ddots & \vdots \\
0 & 0 & \cdots & 0 & 0 &  0 & \cdots & 1 \\
\end{bmatrix} U.
\end{equation}
\end{remark}

%%%%%%%%%%%%%%%%%%%%%%%%%%%%%%%%%%%%%%%%%%%%%%%%%%%%%%%%%%%%%%%%%%%%%%%%%%%%%%%%%%%%%%%%%%%%%%%%%%%%%%%%%%%%%%%%%%%%%%%%%%%%
\section{ Homogeneous spaces}\label{appendix:homogeneous spaces}
A \emph{homogeneous space} is a quotient space $G/H$ where $G$ is a Lie group and $H$ is a closed Lie subgroup of $G$. For each $A\in G$, we denote by $[A]$ the coset $AH$, which is an element in $G/H$. For simplicity, we assume that $G\subseteq \mathbb{R}^{n\times n}$ is a compact, semi-simple matrix Lie group. By definition, $G$ acts transitively on $G/H$ via the action:
\[
G\times G/H \to G/H,\quad (B,[A]) \to [BA].
\] 
Hence for each $[A] \in G/H$, the \emph{tangent space} of $G/H$ at $[A]$ is:
\[
T_{[A]} G/H = A \mathfrak{g}/\mathfrak{h} = \lbrace A(X + \mathfrak{h}): X\in \mathfrak{g} \rbrace.
\]

Let $g^c$ be the Riemannian metric defined in \eqref{eqn:metric Lie group} on $G$. There exists a linear subspace $\mathfrak{m} \subseteq \mathfrak{g}$ such that 
\begin{equation}\label{eqn:homogeneous space tangent space}
T_{[\I_n]} G/H \simeq \mathfrak{m},\quad \mathfrak{g} = \mathfrak{m} \oplus \mathfrak{h}.
\end{equation}
The \emph{geodesic} $\gamma$ passing through $[A]\in G/H$ with the tangent direction $X\in \mathfrak{m}$ is given by 
\begin{equation}\label{eqn:homogeneous space geodesic}
\gamma(t) = [A\exp(tX)].
\end{equation}

\subsection{Stiefel manifold}\label{subsubsec:Stiefel}
Let $\mathbb{F}$ be real or complex number field. For each pair of positive integers $k < n$, we define the \emph{Stiefel manifold} $\V(k,n)$ of orthonormal $k$-frames in $\mathbb{F}^n$ to be 
\begin{equation}\label{eqn:Stiefel definition}
\V_{\mathbb{F}}(k,n):= \lbrace 
A\in \mathbb{F}^{n\times k}: A^{\ast} A = \I_k
\rbrace.
\end{equation}
The \emph{tangent space} of $\V_{\mathbb{F}}(k,n)$ at $A\in \V_{\mathbb{F}}(k,n)$ is 
\[
T_{A} \V_{\mathbb{F}}(k,n) = \lbrace 
\Delta\in \mathbb{F}^{n\times k}: A^{\ast} \Delta + \Delta^{\ast} A = 0
\rbrace.
\]

We notice that $\V_{\mathbb{R}}(k,n)$ (resp. $\V_{\mathbb{C}}(k,n)$) is a homogeneous space, i.e., it is the quotient of $\SO(n)$ (resp. $\SU(n)$) by its subgroup $\SO(n-k)$ (resp. $\SU(n-k)$):
\begin{align*}
\V_{\mathbb{R}}(k,n) &\simeq \O(n)/\O(n-k) \simeq  \SO(n) / \SO(n-k), \\
\V_{\mathbb{C}}(k,n) &\simeq \U(n)/\U(n-k) \simeq  \SU(n) / \SU(n-k). 
\end{align*}

The quotient map $q:\SO(n)\to \SO(n)/\SO(n-k)\simeq \V_{\mathbb{R}}(n,k)$ is simply given by $A \to A \I_{n,k}$ where 
\[
\I_{n,k} = \begin{bmatrix}
\I_{k} \\
0
\end{bmatrix} \in \mathbb{R}^{n\times k}.
\] 
The quotient map $\SU(n)\to \SU(n)/\SU(n-k)\simeq \V_{\mathbb{C}}(n,k)$ is defined in the same way.

In particular, if $k= 1$, then $\V_{\mathbb{R}}(k,n)\simeq \mathbb{S}^{n-1}$ and the map $q$ under this identification becomes
\begin{equation}\label{eqn:fibration sphere}
q: \SO(n) \to \mathbb{S}^{n-1},\quad \pi(A) = Ae_1,
\end{equation} 
where $e_1 = (1,0,\dots, 0)^\tp$. The fiber of $q$ is obviously homeomorphic to $\SO(n-1)$ and hence $q$ is a fibration of $\SO(n)$ over $\mathbb{S}^{n-1}$ whose typical fiber is $\SO(n-1)$ Since $A\in \SO(2)$ can be parametrized as 
\[
A = \begin{bmatrix}
\cos \theta & \sin\theta \\
- \sin\theta & \cos \theta
\end{bmatrix}
\]
Hence it is straightforward to verify that $q:\SO(2) \to \mathbb{S}^1$ is a homeomorphism. 

By \citep[Section 2.4.1]{edelman1998geometry}, the \emph{geodesic} curve passing through $A\in \V_{\mathbb{F}}(k,n)$ with the tangent direction $X \in T_A \V_{\mathbb{F}}(k,n)$ is
\begin{equation}\label{eqn:Stiefel geodesic}
\gamma(t) = A M(t) + Q N(t),
\end{equation}
where $QR = (\I_{n} - A A^\tp)X$ is the QR-decomposition of $(\I_{n} - A A^\tp)X$ and $M(t)$ and $N(t)$ are $k\times k$ matrices determined by
\[
\begin{bmatrix}
M(t) \\
N(t)
\end{bmatrix} = \exp(t \begin{bmatrix}
A^\tp X &  -R^\tp\\
R & 0 
\end{bmatrix}) \I_{2k,k}.
\]
Here the Riemannian metric $g^c$ equipped on $\V_{\mathbb{F}}(k,n)$ is the \emph{canonical metric} induced from the quotient space structure of $\V_{\mathbb{F}}(k,n)$. To be more precise, 
\[
g_A^c(\Delta,\Delta) := 2\tr \Delta^\tp (\I_n - AA^\tp) \Delta.
\]
With this canonical metric, it is straightforward to verify that a geodesic starting from $A\in \V_{\mathbb{F}}(k,n)$ with the tangent direction $PX\I_{n,k}\in T_A \V_{\mathbb{F}}(k,n)$ can also be written as 
\begin{equation}\label{eqn:Stiefel godeosic0}
\gamma(t) = P \exp(tX) \I_{n,k},
\end{equation}
where $P$ is an orthogonal matrix (resp. unitary matrix) such that $ P \I_{n,k} = A$. 

\begin{proposition}\label{prop:length geodesic Stiefel}
Let $\gamma$ be the geodesic curve starting from $A\in \V_{\mathbb{R}}(k,n)$ with the direction $PX\I_{n,k}\in T_{A}\V_{\mathbb{R}}(k,n)$ where $P,X$ are as above. The length of $\gamma$ is 
\begin{equation}
L(\gamma) = \sqrt{\int_{0}^\tau g^c_{\gamma(t)} (\gamma'(t),\gamma'(t)) dt} = \sqrt{ -\tr(Y^2) + 2\tr(ZZ^\tp)} = \lVert X \rVert_F.
\end{equation}
In particular, if we specialize to the case $k=1$, then the geodesic curve connecting $A,B\in \V(1,n) = \mathbb{S}^{n-1}$ is just the arc of the big circle from $A$ to $B$ and the geodesic distance between $A$and $B$ is simply
\begin{equation}\label{eqn:distance sphere}
d^c(A,B) = \theta,
\end{equation}
where $\theta\in [0,\pi]$ is the angle between $A$ and $B$, i.e., $\cos \theta = A^\tp B$.
\end{proposition}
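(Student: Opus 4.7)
The plan is to exploit the realization $\V_{\mathbb{R}}(k,n) = \SO(n)/\SO(n-k)$ together with the bi-invariance of the metric $g^c$ of \eqref{eqn:metric Lie group}, under which the quotient map $\SO(n)\to \V_{\mathbb{R}}(k,n)$ becomes a Riemannian submersion. Under this setup the geodesic $\gamma(t) = P\exp(tX)\I_{n,k}$ is the horizontal projection of a one-parameter subgroup of $\SO(n)$, and its speed can be measured at the base point $\I_{n,k}$ via the horizontal lift $X\I_{n,k}$. I would decompose the skew-symmetric generator into blocks compatible with the subgroup $\SO(n-k)$,
\[
X = \begin{bmatrix} Y & -Z^\tp \\ Z & 0 \end{bmatrix}, \qquad Y\in\mathfrak{o}(k),\ Z\in\mathbb{R}^{(n-k)\times k},
\]
where the zero block expresses the fact that $X$ lies in the horizontal complement of $\mathfrak{o}(n-k)$, so that $X\I_{n,k} = \begin{bmatrix}Y\\ Z\end{bmatrix}$ represents a lift of $\gamma'(0)$.

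Next, the constant-speed property of geodesics combined with the $\SO(n)$-invariance of $g^c$ reduces the length integral to $L(\gamma) = \tau\sqrt{g^c_{\I_{n,k}}(X\I_{n,k},X\I_{n,k})}$. Substituting the block form of $X\I_{n,k}$ into the canonical metric yields the intermediate expression $\sqrt{-\tr(Y^2) + 2\tr(ZZ^\tp)}$. A direct block-matrix computation of $X^2$ then gives $\tr(X^2) = \tr(Y^2) - 2\tr(ZZ^\tp)$; since $X$ is skew-symmetric, $\lVert X\rVert_F^2 = \tr(X^\tp X) = -\tr(X^2) = -\tr(Y^2) + 2\tr(ZZ^\tp)$, which closes the first chain of equalities in the statement.

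For the $k=1$ specialization, $\mathfrak{o}(1)=0$ forces $Y=0$, so $X$ is determined by the single column $z\in\mathbb{R}^{n-1}$. I would then observe that $X^2$ acts as multiplication by $-\lVert z\rVert^2$ on the two-plane spanned by $A$ and the lift of $z$, and trivially on its orthogonal complement. Summing the exponential series shows that $\gamma(t)$ is the planar rotation through angle $\lVert z\rVert t$ in that plane, i.e.\ the great-circle arc issuing from $A$ in the direction $XA$. Solving $\gamma(\tau)=B$ then identifies $\tau\lVert z\rVert$ with the angle $\theta\in[0,\pi]$ characterized by $\cos\theta = A^\tp B$, and equating this with the arc length delivers $d^c(A,B) = \theta$.

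The main obstacle will be pinning down the normalization of the canonical metric implicit in the statement: the relative weighting of the $-\tr(Y^2)$ and $2\tr(ZZ^\tp)$ pieces depends on whether one views $g^c$ as induced from the bi-invariant Frobenius form on $\mathfrak{o}(n)$ or via the Edelman--Arias--Smith convention, and one must check that the projection from $\SO(n)$ preserves norms on horizontal vectors so that the Frobenius norm on the generator $X$ really equals the Riemannian speed on $\V_{\mathbb{R}}(k,n)$; once this is verified, the remainder is routine linear algebra together with the explicit computation of $\exp(tX)A$ in the rank-one case.
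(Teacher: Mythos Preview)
Your proposal is correct and, for the general length formula, gives considerably more detail than the paper, which simply asserts that the first chain of equalities ``follows directly from the definition of $L(\gamma)$ and $g^c$.'' Your block decomposition of $X$ and the computation $\tr(X^2)=\tr(Y^2)-2\tr(ZZ^\tp)$ are exactly what one needs to justify that line, and your explicit caution about the normalization of $g^c$ is well placed (the paper's displayed formula for $g^c_A$ is not the standard Edelman--Arias--Smith one, so this point genuinely needs checking).

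The only substantive difference is in the $k=1$ specialization. The paper does not compute $\exp(tX)$ directly; instead it specializes the Edelman-type system \eqref{eqn:Stiefel-tangent direction1}--\eqref{eqn:Stiefel-tangent direction3} to $k=1$, where $A^\tp X=0$ forces the $2\times 2$ generator to be $\begin{bmatrix}0&-r\\ r&0\end{bmatrix}$, and then reads off $m=\cos r$, $n=\sin r$, $r=\theta$ from the resulting scalar equations. Your route---observing that $X^2$ acts as $-\lVert z\rVert^2$ on the plane spanned by $A$ and its tangent direction and annihilates the orthogonal complement, hence $\exp(tX)e_1$ is a planar rotation---is a more geometric argument that avoids the auxiliary QR/matrix-exponential machinery entirely. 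Both arrive at $d^c(A,B)=\theta$; your approach has the advantage of making the great-circle picture transparent, while the paper's approach illustrates how the general geodesic system collapses in rank one, which is useful given that the same system is invoked elsewhere in the paper (e.g.\ in deriving \eqref{eqn:Brownian paths on sphere1}--\eqref{eqn:Brownian paths on sphere2}).
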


\begin{proof}
The first part follows directly from the definition of $L(\gamma)$ and $g^c$. Hence it is only left to prove the statement for $k=1$. In this case, we notice that $A,B$ are simply column vectors of dimension $n$. We let $\theta \in [0,\pi]$ be the angle between $A$ and $B$. By definition of $X$, we must have $A^\tp X = 0$ and the system equation (11)--(13) in the main paper reduces to 
\begin{align*}
B &= mA  + nQ, \\
r Q & = X, \\
\begin{bmatrix}
m \\
n
\end{bmatrix} & = \exp(\begin{bmatrix}
0 & -r\\
r & 0
\end{bmatrix}) \I_{2,1},
\end{align*}
where $m,n,r$ are real numbers and $A,B,Q$ are unit column vectors of dimension $n$. Now it is straightforward to verify that $r = \theta$, $m = \cos \theta,n=\sin \theta$ and $\lVert X \rVert_F = \theta$.
\end{proof}

\begin{lemma}\label{lemma:Stiefel tangent space}
For any $Q\in \SO(n)$ such that $Q\I_{n,k} = A$, we have 
\[
T_{A} \V_{\mathbb{R}}(k,n)  = Q T_{\I_{n,k}} V_{\mathbb{R}}(k,n).
\]
\end{lemma}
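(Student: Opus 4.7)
The plan is to view the equality as coming from the differential of left multiplication by $Q$, which is a global diffeomorphism of $\V_{\mathbb{R}}(k,n)$ that sends $\I_{n,k}$ to $A$. Concretely, I would introduce the map $L_Q:\V_{\mathbb{R}}(k,n)\to\V_{\mathbb{R}}(k,n)$ defined by $L_Q(B)=QB$; since $(QB)^{\tp}(QB)=B^{\tp}Q^{\tp}QB=B^{\tp}B=\I_k$, this map is well-defined, and $L_{Q^{\tp}}$ provides its smooth inverse, so $L_Q$ is a diffeomorphism. In particular, its derivative at $\I_{n,k}$ is a linear isomorphism from $\T_{\I_{n,k}}\V_{\mathbb{R}}(k,n)$ onto $\T_{A}\V_{\mathbb{R}}(k,n)$.

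Next, because $L_Q$ is the restriction of the linear map $B\mapsto QB$ on the ambient space $\mathbb{R}^{n\times k}$, its differential at any point is itself given by left multiplication by $Q$. Hence the isomorphism above is precisely the map $\Delta\mapsto Q\Delta$, which yields $\T_{A}\V_{\mathbb{R}}(k,n)=Q\,\T_{\I_{n,k}}\V_{\mathbb{R}}(k,n)$.

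For readers who prefer a direct verification, I would also note the following check using the characterization $\T_{B}\V_{\mathbb{R}}(k,n)=\{\Delta\in\mathbb{R}^{n\times k}:B^{\tp}\Delta+\Delta^{\tp}B=0\}$ from Appendix B.1: if $\Delta\in\T_{\I_{n,k}}\V_{\mathbb{R}}(k,n)$, then using $A=Q\I_{n,k}$ and $Q^{\tp}Q=\I_n$,
\[
A^{\tp}(Q\Delta)+(Q\Delta)^{\tp}A=\I_{n,k}^{\tp}\Delta+\Delta^{\tp}\I_{n,k}=0,
\]
so $Q\Delta\in\T_{A}\V_{\mathbb{R}}(k,n)$, giving the inclusion $Q\,\T_{\I_{n,k}}\V_{\mathbb{R}}(k,n)\subseteq\T_{A}\V_{\mathbb{R}}(k,n)$. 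Equality then follows either by applying the same inclusion with $Q$ replaced by $Q^{\tp}$ and $A$ replaced by $\I_{n,k}$, or simply by noting that both sides are real vector spaces of the same dimension $nk-k(k+1)/2$.

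There is no real obstacle here: the statement is essentially the $\SO(n)$-equivariance of the tangent bundle of the homogeneous space $\V_{\mathbb{R}}(k,n)\simeq\SO(n)/\SO(n-k)$. The only point worth stressing is that because left multiplication is linear in the ambient matrix space, one does not need the general machinery of pushforwards on homogeneous spaces; the differential is literally multiplication by $Q$, which makes both the algebraic check and the bookkeeping of dimensions entirely transparent.
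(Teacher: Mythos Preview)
Your proof is correct and follows essentially the same route as the paper: establish the inclusion $Q\,\T_{\I_{n,k}}\V_{\mathbb{R}}(k,n)\subseteq\T_{A}\V_{\mathbb{R}}(k,n)$ by a direct algebraic check, then conclude equality by a dimension count. Your computation is in fact slightly cleaner than the paper's, which decomposes $\Delta$ into blocks and writes $Q=[A\ A']$ to reach the same conclusion; the additional diffeomorphism framing via $L_Q$ is a nice conceptual explanation that the paper omits, but it does not change the underlying argument.
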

\begin{proof}
We notice that 
\[
\dim T_{A} \V_{\mathbb{R}}(k,n) = \dim T_{\I_{n,k}} V_{\mathbb{R}}(k,n) = \dim Q T_{\I_{n,k}} V_{\mathbb{R}}(k,n).
\]
Hence it is sufficient to prove that $Q T_{\I_{n,k}} V_{\mathbb{R}}(k,n) \subseteq T_{A} \V_{\mathbb{R}}(k,n)$. To see this, we let $\Delta = \begin{bmatrix}
\Delta_1 \\
\Delta_2
\end{bmatrix}$ be an element in $T_{\I_{n,k}} V_{\mathbb{R}}(k,n)$ and write $Q = \begin{bmatrix}
A & A'
\end{bmatrix}$. Then we must have 
\[
A^\tp Q \Delta =A^\tp (A\Delta_1 + A'\Delta_2) = \Delta_1\in \mathfrak{o}(k),
\]
since $A^\tp A = \I_k$ and $Q\in \SO(n)$. Therefore, by definition of $T_{A} \V_{\mathbb{R}}(k,n)$, we see that $Q\Delta\in T_{A} \V_{\mathbb{R}}(k,n)$ and this completes the proof.
\end{proof}

\subsection{Grassmannian manifolds}\label{subsubsec:Grassmannian}
Let $k,n$ be two non-negative integers such that $k\le n$. The \emph{Grassmannian manifold $\Gr_{\mathbb{F}}(k,n)$} consisting of all $k$-dimensional linear subspaces of $\mathbb{F}^n$. Moreover, $\Gr_{\mathbb{F}}(k,n)$ is a homogeneous space:
\begin{align}
\label{eqn:Grass def1}
\Gr_{\mathbb{R}}(k,n) &\simeq \O(n)/(\O(k,n) \times \O(n-k,n)) \simeq \V(k,n)/\O(k), \\
\label{eqn:Grass def2}
\Gr_{\mathbb{C}}(k,n) &\simeq \U(n)/(\U(k,n) \times \U(n-k,n)) \simeq \V_{\mathbb{C}}(k,n)/\U(k), 
\end{align}
Here $\O(k,n) \times \O(n-k,n)$ is embedded in $\O(n)$ as a Lie subgroup via the embedding
\[
j: \O(k,n) \times \O(n-k,n) \to \O(n),\quad j(A_1,A_2) = \begin{bmatrix}
A_1  & 0 \\
0 & A_2
\end{bmatrix}.
\]
Similarly, $\U(k) \times \U(n-k)$ is regarded as a Lie subgroup of $\U(n)$ via the same map. 

We remakr that the diffeomorphism $\Gr_{\mathbb{R}}(k,n) \simeq \V_{\mathbb{R}}(k,n)/\O(k)$ (resp. $\Gr_{\mathbb{C}}(k,n) \simeq \V_{\mathbb{C}}(k,n)/\U(k)$) is based on the fact that every $k$-dimensional subspace of $\mathbb{F}^n$ admits an orthonormal basis, which is unique up to a rotation. To be more precise, let $\mathbb{A}$ be subspace in $\mathbb{F}^n$ of dimension $k$ and let $v_1,\dots, v_k$ be an orthonormal basis. Then we have $V = [v_1,\dots, v_k]\in \V_{\mathbb{F}}(k,n)$ and the column vectors of $VQ$ also form an orthonormal basis of $\mathbb{A}$, for any $Q\in \O(k)$ (resp. $Q\in \U(k)$). Therefore, in the rest of this paper, we simply represent an element $\mathbb{A}\in \Gr_{\mathbb{F}}(k,n)$ by an $n\times k$ matrix $Y_{\mathbb{A}}\in \V_{\mathbb{F}}(k,n)$, whose column vectors form an orthonormal basis of $\mathbb{A}$. The \emph{tangent space} of $\Gr_{\mathbb{F}}(k,n)$ at $\mathbb{A}$ is:
\begin{equation}\label{eqn:Grass tangent space}
T_{\mathbb{A}} \Gr_{\mathbb{F}}(k,n) = \lbrace 
\Delta \in \mathbb{F}^{n\times k}: Y_{\mathbb{A}}^{\ast} \Delta = 0
\rbrace
\end{equation}
The canonical metric on Stiefel manifolds we discussed in Section~\ref{subsubsec:Stiefel} induces a Riemannian metric on $\Gr(k,n)$ via its quotient space structure \eqref{eqn:Grass def1} and \eqref{eqn:Grass def2}. Equipping with this induced metric, the \emph{geodesic curve} emanating from $\mathbb{A}\in \Gr_{\mathbb{F}}(k,n)$ with tangent direction $\Delta \in T_{\mathbb{A}} \Gr_{\mathbb{F}}(k,n)$ is given by 
\begin{equation}\label{eqn:Grass geodesic}
\mathbb{A}(t) = \operatorname{span} \left( \begin{bmatrix}
Y_{\mathbb{A}} V & U 
\end{bmatrix} \begin{bmatrix}
\cos \Sigma t \\
\sin \Sigma t 
\end{bmatrix} V^\tp \right),
\end{equation}  
where $U \Sigma V^\tp$ is the compact singular value decomposition of $\Delta$ and $\operatorname{span}(Y)$ means the linear subspace spanned by column vectors of a matrix $Y$. We refer interested readers to \citep[Section 2.5.1]{edelman1998geometry} for detailed computation of geodesic curves on $\Gr_{\mathbb{F}}(k,n)$. Moreover, the geodesic distance between $\mathbb{A},\mathbb{B}\in \Gr(k,n)$ is simply 
\begin{equation}\label{eqn:distance Grassmann}
d^c (\mathbb{A},\mathbb{B}) = \sqrt{\sum_{j=1}^k \theta_j^2},
\end{equation}
where $ \cos(\theta_j)$ is the $j$-th singular value of $A^\tp B$ and $A$ (resp. $B$) is the $n\times k$ matrix such that $A^\tp A = \I_k$ (resp. $B^\tp B = \I_k$) representing $\mathbb{A}$ (resp. $\mathbb{B}$).

In particular, $\Gr_{\mathbb{R}}(1,n) \simeq \mathbb{P}_{\mathbb{F}}^{n-1}$, the $(n-1)$-dimensional projective space over $\mathbb{F}$, consisting of all lines pass through the origin in $\mathbb{F}^n$. We remind the audiences who are not familiar with projective spaces that 
\begin{equation}\label{eqn:real projective space}
\mathbb{P}_{\mathbb{R}}^{n-1} \simeq \mathbb{S}^{n-1}/\mathbb{Z}_2,
\end{equation}
obtained from $\mathbb{S}^{n-1}$ by identifying the antipodal points. Over $\mathbb{C}$, we have 
\begin{equation}\label{eqn:complex projective space}
\mathbb{P}_{\mathbb{C}}^{n-1} \simeq \mathbb{S}^{2n-1}/\mathbb{S}^1,
\end{equation} 
identifying $x\in \mathbb{S}^{2n-1}\subseteq \mathbb{C}^{n}$ with points in $ \mathbb{S}^{2n-1}$ of the form $\lambda x$ for $\lambda \in \mathbb{S}^1 \subseteq \mathbb{C}$. In this case, formula \eqref{eqn:distance Grassmann} becomes 
\begin{equation}\label{eqn:distance projective space}
d^c(\mathbb{A},\mathbb{B}) = \lvert \theta \rvert,
\end{equation}
where $ \theta = \arccos(A^\ast B)\in [-\pi/2,\pi/2)$ and $A$ (resp $B$) is an element in $\V_{\mathbb{F}}(1,n)$ representing $\mathbb{A}$ (resp. $\mathbb{B}$).
%%%%%%%%%%%%%%%%%%%%%%%%%%%%%%%%%%%%%%%%%%%%%%%%%%%%%%%%%%%%%%%%%%%%%%%%%%%%%%%%%%%%%%%%%%%%%%%%%%%%%%%%%%%%%%%%%%%%%%%%%%%%
\section{Brownian motion on matrix manifolds manifolds}\label{appendix:GP on matrix manifold}
In this section, we provide details of how to derive algorithms to simulate Brownian paths on Lie groups, Stiefel manifolds and Grassmannian manifolds (cf. Algorithms~\ref{alg:simulation of BM on O(n)}--\ref{alg:Brownian paths on projective spaces}).
%%%%%%%%%%%%%%%%%%%%%%%%%%%%%%%%%%%%%%%%%%%%%%%%%%%%%%%%%%%%%%%%%%%%%%%%%%%%%%%%%%%%%%%%%%%%%%%%%%%%%%%%%%%%%%%%%%%%%%%%%%%%
\subsection{Brownian motion on $\O(n)$, $\SO(n)$, $\U(n)$ and $\SU(n)$}\label{subsec:GP on SO(n)} 
We notice by Table~\ref{tab:properties of Lie groups} that $\O(n)$ is a disconnected manifold, whose identity component is $\SO(n)$. Therefore, it is sufficient to simulate the Brownian motion on $\SO(n)$. By Equation~\eqref{eqn:geodesic O(n)}, geodesic curves on $\SO(n)$ are explicitly known, Algorithm \ref{alg:simulation of BM} can be applied directly. We recall that by Equation~\eqref{eqn:lie algebra o(n)} the tangent space of $\SO(n)$ at $A\in \SO(n)$ is simply a left translation of $\mathfrak{o}(n)$ by $A$, where $\mathfrak{o}(n)$ is the space of $n\times n$ skew symmetric matrices. Moreover, since $\mathfrak{o}(n)$ is a vector space, a Brownian path $W(t)$ in $\mathfrak{o}(n)$ is of the form $W(t) = (W_{ij}(t))$ where $W_{ii}(t) = 0,i=1,\dots, n$, $W_{ij}(t)$ is a Brownian path on $\mathbb{R}$ and $W_{ji}(t) = -W_{ij}(t),1\le i < j \le n$. %For a fixed step size $\delta>0$ and each positive integer $i$, \red{we denote by $W(t)$ 
On $\U(n)$ (resp. $\SU(n)$), the algorithm for the simulation of the Brownian motion sample paths is similar. We simply replace the Brownian motion sample path $W(t)\in \mathfrak{o}(n)$ in Algorithm~\ref{alg:simulation of BM on O(n)} by a Brownian motion sample path in $\mathfrak{u}(n)$ (resp. $\mathfrak{su}(n)$).
%%%%%%%%%%%%%%%%%%%%%%%%%%%%%%%%%%%%%%%%%%%%%%%%%%%%%%%%%%%%%%%%%%%%%%%%%%%%%%%%%%%%%%%%%%%%%%%%%%%%%%%%%%%%%%%%%%%%%%%%%%%%
\subsection{Brownian motion on Stiefel manifold}\label{subsec:GP on Stiefel} For notational simplicity, we only discuss the real Stiefel manifolds, as the case of complex Stiefel manifolds is exactly the same. We recall that given positive integers $k \le n$,  
the Stiefel manifold over $\mathbb{R}$ is defined as 
%\[
$\V_{\mathbb{R}}(k,n) := \lbrace
A \in \mathbb{R}^{n\times k}: A^\tp A = \I_k
\rbrace.$
%\]
\if In particular, if $k=1$ then $\V_{\mathbb{R}}(k,n)$ is simply the $(n-1)$-dimensional sphere $\mathbb{S}^{n-1}$. For instance, $\V_{\mathbb{R}}(1,2)  = \left\lbrace
(\cos \theta, \sin \theta)^\tp: \theta \in [0,2\pi)
\right\rbrace = \mathbb{S}^1$.
Another extreme example is $k = n$, in which case $\V_{\mathbb{R}}(k,n)$ is the orthogonal group $\O(n)$ discussed in Section~\ref{subsec:GP on SO(n)}.\fi  To estimate the heat kernel on $\V_{\mathbb{R}}(k,n)$, we need to \if compute the distance between $A,B\in \V_{\mathbb{R}}(k,n)$, which amounts to\fi find $X\I_{n,k}\in T_{A} \V_{\mathbb{R}}(k,n)$ such that the geodesic determined by $A$ and $X\I_{n,k}$ passes through $B$. According to equation \eqref{eqn:Stiefel geodesic}, $X$ can be calculated by solving the system:
\begin{align}
\label{eqn:Stiefel-tangent direction1}
B &= A M + QN, \\
\label{eqn:Stiefel-tangent direction2}
QR & = (\I_n - A A^\tp) X, \\
\label{eqn:Stiefel-tangent direction3}
\begin{bmatrix}
M \\
N
\end{bmatrix} & = \exp \left( \begin{bmatrix}
A^\tp X & -R^\tp\\
R & 0
\end{bmatrix} \right) \I_{2k,k},
\end{align}
where $Q$ is a $n\times k$ matrix such that $Q^\tp Q = \I_k$ and $R$ is a $k\times k$ upper triangular matrix. In general, the system \eqref{eqn:Stiefel-tangent direction1}-\eqref{eqn:Stiefel-tangent direction3} has no explicit solution, but one can solve it explicitly if $k=1$ by Proposition~\ref{prop:length geodesic Stiefel}. \if According to Algorithm \ref{alg:simulation of BM}, we need two ingredients to simulate Brownian paths on a Stiefel manifold $\V_{\mathbb{R}}(k,n)$: Brownian paths on the tangent space $T_{A} \V_{\mathbb{R}}(k,n)$ and the geodesic curve passing through $A$ with tangent direction $X\in T_{A} \V_{\mathbb{R}}(k,n)$.\fi \if To do concrete computations, we write an element $A\in \V_{\mathbb{R}}(k,n)$ as an $n\times k$ matrix such that $A^\tp A = \I_k$, rather than an equivalence class of an orthogonal matrix in the quotient $\SO(n)/\SO(n-k)$.\fi Although \eqref{eqn:Stiefel godeosic0} provides us an explicit expression for geodesics, it is not obvious how we could simulate Brownian paths on $T_{A} \V_{\mathbb{R}}(k,n) = \{\Delta \in \mathbb{R}^{n\times k}: A^\tp \Delta + \Delta^\tp A = 0\}$ for arbitrary $A\in \V_{\mathbb{R}}(k,n)$. To this end, we first observe that  %\eqref{eqn:Stiefel geodesic}
\begin{equation}\label{eqn:Stiefel tangent space identity}
T_{\I_{n,k}} \V_{\mathbb{R}}(k,n) = \left\lbrace 
\begin{bmatrix}
\Delta_1 \\
\Delta_2
\end{bmatrix}: \Delta_1\in \mathfrak{o}(k),\Delta_2\in \mathbb{R}^{(n-k) \times k}
\right\rbrace.
\end{equation} %\ref{lemma:Stiefel tangent space} 
Lemma~\ref{lemma:Stiefel tangent space} implies that at any point $A\in \V_{\mathbb{R}}(k,n)$, we could first simulate Brownian paths in $T_{\I_{n,k}} \V_{\mathbb{R}}(k,n)$, which is easy by \eqref{eqn:Stiefel tangent space identity}, then multiply the Brownian path by some $Q\in \SO(n)$ such that $Q \I_{n,k} =A$ to obtain a Brownian path on $T_{A} \V_{\mathbb{R}}(k,n)$. Thus, We obtain Algorithm~\ref{alg:Brownian paths on Stiefel} for simulations of Brownian motions on $\V_{\mathbb{R}}(k,n)$. If $k=1$, Algorithm~\ref{alg:Brownian paths on Stiefel} can be simplified. We have 
\begin{equation}\label{eqn:Brownian paths on sphere1}
X_{i,\delta}(\delta) = 
\begin{bmatrix}
0 \\
\epsilon_2\\
\vdots \\
\epsilon_n
\end{bmatrix}, \quad R = \sqrt{\sum_{j=2}^n \epsilon^2_j}, \quad Q = R^{-1}Q_{i-1}X_{i,\delta}(\delta)
\end{equation}
and $M = \cos R, N = \sin R$, which implies 
\begin{equation}\label{eqn:Brownian paths on sphere2}
A_i =  A_{i-1}\cos R  + R^{-1} Q_{i-1}X_{i,\delta}(\delta) \sin R.
\end{equation}

Combining \eqref{eqn:Brownian paths on sphere2} with algorithm~\ref{alg:Brownian paths on Stiefel}, we obtain Algorithm~\ref{alg:Brownian paths on spheres}.

\subsection{Brownian motion on Grassmannian manifolds} \label{grass manifold} Let $\mathbb{F}$ be $\mathbb{R}$ or $\mathbb{C}$. Given positive integers $k \le n $, the Grassmannian manifold $\Gr_{\mathbb{F}}(k,n)$ is defined by 
\[
\Gr_{\mathbb{F}}(k,n) := \lbrace 
\mathbb{A}\subseteq \mathbb{F}^n: \text{$k$-dimensional linear subspace}
\rbrace.
\] 
\if
Here $X^\ast $ denotes the conjugate transpose of a matrix $X$.
\fi
%Geometrically, $\Gr_{\mathbb{F}}(k,n)$ can be regarded as the manifold consisting of all $k$-dimensional subspaces of $\mathbb{F}^n$. More explicitly, the subspace corresponding to $A\in \Gr_{\mathbb{F}}(k,n)$ is simply the subspace spanned by column vectors of $A$. 
\if In particular, $\mathbb{P}^{n-1}_{\mathbb{F}}:=\Gr_{\mathbb{F}}(1,n)$ is called the $(n-1)$-dimensional projective space over $\mathbb{F}$. According to the definition, $\mathbb{P}^{n-1}_{\mathbb{F}}$ consists of all $n\times n$ matrices $A$ which can be written as $A = u u^\ast$, where $u$ is a unit column vector of dimension $n$. For instance, if $n = 2$ and $\mathbb{F} = \mathbb{R}$, then we can write $u = (\cos \theta, \sin \theta)^\tp \in \mathbb{R}^2$ and we have 
\begin{equation}\label{eqn:parametrization real P1}
\mathbb{P}^1_{\mathbb{R}} = \left\lbrace
\begin{bmatrix}
\cos^2 \theta & \cos \theta \sin \theta \\
 \cos \theta \sin \theta & \sin^2 \theta
\end{bmatrix}: \theta \in [0,2 \pi )
\right\rbrace.
\end{equation}  
Another example is for $n = 2$ and $\mathbb{F} = \mathbb{C}$. In this case, we have $u =(\cos \theta e^{i \phi}, \sin\theta e^{i \psi})\in \mathbb{C}^2$ with $\theta \in [0,\pi/2], \phi,\psi \in [0,2\pi)$ and  
\begin{equation}
\begin{aligned}\label{eqn:parametrization complex P1}
\mathbb{P}^1_{\mathbb{C}} = \left\lbrace
\begin{bmatrix}
\cos^2 \theta & \cos\theta \sin \theta e^{i (\phi - \psi)} \\
\cos\theta \sin \theta e^{i (\psi - \phi )}& \sin^2 \theta
\end{bmatrix}: \\ \theta \in [0,\pi/2], \phi,\psi \in [0,2\pi) \right\rbrace.
\end{aligned}
\end{equation} 
In general, $\Gr_{\mathbb{F}}(k,n)$ does not admit a simple parametrisation, like \eqref{eqn:parametrization real P1} and \eqref{eqn:parametrization complex P1} but there are several convenient ways to describe a point in $\Gr_{\mathbb{F}}(k,n)$.
Let $\mathbb{A}$ be a point in $\Gr_{\mathbb{F}}(k,n)$.\fi We denote by $Y_{\mathbb{A}}$ an $n\times k$ matrix whose column vectors form an orthonormal basis of $\mathbb{A}$. According to Appendix~\ref{subsubsec:Grassmannian}, the geodesic starting from $\mathbb{A}$ with tangent direction $\Delta \in T_{\mathbb{A}} \Gr_{\mathbb{F}}(k,n) = \{\Delta\in \mathbb{F}^{n\times k}: Y_{\mathbb{A}}^\tp \Delta = 0\}$ is represented by the curve 
\begin{equation}\label{eqn:GP on Grass geodesic} %Section~\ref{subsubsec:Grassmannian}
Y(t) = \begin{bmatrix}
Y_{\mathbb{A}}V & U 
\end{bmatrix} \begin{bmatrix} 
\cos \Sigma t  \\
\sin \Sigma t
\end{bmatrix} V^{\ast}
\end{equation}
in $V_{\mathbb{F}}(k,n)$. Here $U \Sigma V^{\ast}$ is the compact singular value decomposition of $\Delta$. Moreover, we notice that if $Y^\perp_{\mathbb{A}}$ is any $n\times (n-k)$ matrix whose column vectors form an orthonormal basis of the complement of $\mathbb{A}$ in $\mathbb{F}^n$, then $\Delta \in T_\mathbb{A} \Gr_{\mathbb{F}}(k,n)$ can also be written as 
\begin{equation}
\Delta = Y^\perp_{\mathbb{A}} H,\quad H\in \mathbb{F}^{(n-k) \times k}.
\end{equation}
We write diagonal entries of $\Sigma$ as $\sigma_1,\dots, \sigma_k$ and we have Algorithm~\ref{alg:Brownian paths on Grassmann manifolds} to simulate a Brownian path on $\Gr(k,n)$ starting from $\mathbb{A}$. We remark that $\Gr_{\mathbb{F}}(1,n)$ is of particular interest since in this case we have $\Gr_{\mathbb{F}}(1,n) = \mathbb{P}_{\mathbb{F}}^{n-1}$. We recall from \eqref{eqn:real projective space} that $\mathbb{P}_{\mathbb{R}}^{n-1}$ is simply the quotient of $\mathbb{S}^{n-1}$ by a $\mathbb{Z}_2$-action. %\eqref{eqn:real projective space}
Hence the heat kernel on $\mathbb{P}_{\mathbb{R}}^{n-1}$ can be efficiently estimated by the combination of Algorithm~\ref{alg:Brownian paths on spheres} and \eqref{eqn:heat kernel quotient manifold}. The case of $\mathbb{P}_{\mathbb{C}}^{n-1}$ is more subtle. According to \eqref{eqn:complex projective space}, we have that $\mathbb{P}_{\mathbb{C}}^{n-1}$ is the quotient of $\mathbb{S}^{2n-1}$ by an action of $\mathbb{S}^1$. Hence we have an integral formula \cite{elworthy1982stochastic,ndumu1996integral} for the heat kernel on $\mathbb{P}_{\mathbb{C}}^{n-1}$. It turns out that this integral formula is not practically useful. To this end, we may specialize Algorithm~\ref{alg:Brownian paths on Grassmann manifolds} to obtain Algorithm~\ref{alg:Brownian paths on projective spaces} for the simulation of Brownian paths starting from a fixed $[v]$ on $\mathbb{P}_{\mathbb{C}}^{n-1}$, where $[v]$ denotes the line in $\mathbb{C}^n$ uniquely determined by the unit norm vector $v\in \mathbb{S}^{2n-1} \subseteq \mathbb{C}^n\setminus \{0\}$.

%%%%%%%%%%%%%%%%%%%%%%%%%%%%%%%%%%%%%%%%%%%%%%%%%%%%%%%%%%%%%%%%%%%%%%%%%%%%%%%%%%%%%%%%%%%%%%%%%%%%%%%%%%%%%%%%%%%%%%%%%%%%
\section{Heat kernel}\label{appendix:heat kernel}
Let $M$ be a Riemannian manifold with metric $g^M$ and let $G$ be a group acting on $M$ freely, properly and isometrically. Then we have Proposition~\ref{prop:heat kernel quotient manifold}, for which we give a proof here due the lack of appropriate reference, although this fact is well-known to the community of differential geometry.
\begin{proposition}\label{prop:heat kernel quotient manifold}
There exists a unique metric on the quotient manifold $X = M/G$ such that 
\begin{equation}\label{eqn:heat kernel quotient manifold}
p^X(t,[x],[y]) =  \int_{G} p^M(t,x,gy),
\end{equation}
where $[x],[y]$ are points in $X$ represented by $x,y\in M$ respectively. In particular, if $G$ is a finite group, then 
\[
p^X (t,[x],[y]) =  \sum_{g\in G} p^M(t,x,gy).
\]
\end{proposition}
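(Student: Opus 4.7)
The plan is to construct the unique Riemannian submersion metric on $X$, verify that the stated formula descends to a well-defined function on $X\times X$, show it satisfies the heat equation and initial condition for $\Delta_X$, and conclude by uniqueness of the heat kernel on a complete Riemannian manifold.

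First I would apply standard quotient theory: because $G$ acts freely, properly, and isometrically on $(M,g^M)$, the orbit space $X=M/G$ is a smooth manifold and carries a unique metric $g^X$ making $\pi\colon M\to X$ a Riemannian submersion (the horizontal distribution $\mathcal{H}_x = (\ker d\pi_x)^{\perp}$ is mapped isometrically by $d\pi_x$). The key consequence I need is the intertwining relation $\Delta_M(f\circ\pi) = (\Delta_X f)\circ\pi$ for every $f\in C^{\infty}(X)$, which holds because the gradient of a $G$-invariant function is horizontal and the O'Neill correction terms for a Riemannian submersion vanish on such functions.

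Next I would check that $K(t,[x],[y]) := \int_G p^M(t,x,gy)\,d\mu_G(g)$ (with Haar measure $\mu_G$, and counting measure when $G$ is discrete) is well defined on $X\times X$. Since $G$ acts by isometries and $p^M$ is determined by the metric, $p^M(t,hx,hy) = p^M(t,x,y)$ for every $h\in G$. Substituting $g\mapsto h^{-1}g$ using bi-invariance of Haar measure (automatic for compact or discrete $G$) makes $K$ invariant under $x\mapsto hx$, and the analogous substitution handles $y\mapsto hy$. Differentiating under the integral, justified by standard Gaussian heat-kernel bounds on complete manifolds, and using that isometries commute with $\Delta_M$, I obtain
\begin{equation*}
\partial_t K(t,[x],[y]) = \tfrac12 \int_G (\Delta_M)_y\, p^M(t,x,gy)\,d\mu_G(g) = \tfrac12 (\Delta_M)_y\, \widetilde K(t,x,y),
\end{equation*}
where $\widetilde K$ is the lift of $K$ to $M$ in the $y$-slot. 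Because $\widetilde K(t,x,\cdot)$ is $G$-invariant, the intertwining identity converts the right-hand side into $\tfrac12 (\Delta_X)_{[y]} K(t,[x],[y])$, so $K$ solves the heat equation on $X$.

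The main obstacle is the initial condition $\lim_{t\to 0}K(t,[x],\cdot) = \delta_{[x]}$ in the distributional sense on $X$. For a test function $\varphi\in C^{\infty}_c(X)$ with lift $\tilde\varphi = \varphi\circ\pi$, the co-area formula for the submersion $\pi$, combined with properness of the action (which furnishes a measurable fundamental domain whose fibre measure matches $\mu_G$), gives the unfolding identity
\begin{equation*}
\int_X K(t,[x],[y])\,\varphi([y])\,d\mathrm{vol}_X([y]) = \int_M p^M(t,x,y)\,\tilde\varphi(y)\,d\mathrm{vol}_M(y),
\end{equation*}
whose right-hand side tends to $\tilde\varphi(x) = \varphi([x])$ as $t\to 0$ by the initial condition for $p^M$. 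The delicate step is justifying both the unfolding and the interchange of the limit with the integral, the latter following from uniform tightness of $p^M(t,x,\cdot)$ near $x$. Once $K$ is identified as a fundamental solution of the heat equation on the complete manifold $X$, uniqueness of the heat kernel forces $K=p^X$; when $G$ is finite, $\mu_G$ is counting measure and the integral collapses to the stated sum.
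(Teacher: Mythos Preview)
Your argument is considerably more detailed than the paper's, which consists of a single sentence citing Gallot--Hulin--Lafontaine for the quotient metric and then asserting that the formula follows ``by recalling the definition of the heat kernel''. Notably, the paper invokes $\pi\colon M\to M/G$ as a \emph{Riemannian covering map} (a local isometry), so despite the integral in the statement it is really only treating the discrete case; there the Laplacians coincide locally and your heat-equation and initial-condition verifications go through cleanly, supplying a genuine proof where the paper merely gestures.

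Where you attempt more --- the positive-dimensional $G$ case --- there is a gap. The intertwining identity $\Delta_M(f\circ\pi)=(\Delta_X f)\circ\pi$ does \emph{not} hold for a general Riemannian submersion: the correct relation carries an extra first-order term $H(f\circ\pi)$, with $H$ the mean curvature vector field of the fibres, and this correction vanishes only when the fibres are minimal submanifolds. A free, proper, isometric Lie-group action need not have minimal orbits (take $\mathbb{R}$ translating the first coordinate of $(\mathbb{R}^2,\,a(y)\,dx^2+dy^2)$ with $a$ non-constant), so your assertion that ``the O'Neill correction terms vanish on such functions'' is false in this generality, and the heat-equation step breaks down. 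Your proof is therefore valid precisely when the fibres are minimal --- which covers the discrete case the paper actually argues and the Hopf fibration $\mathbb{S}^{2n-1}\to\mathbb{P}^{n-1}_{\mathbb{C}}$ (geodesic fibres) used later --- but not in the full generality the proposition appears to claim.
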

\begin{proof}
According to \citep[Proposition 2.20]{gallot1990riemannian}, there exists a unique metric on $X$, such that the quotient map $\pi: M \to M/G$ is a Riemannian covering map, i.e., $\pi$ is a locally isometric smooth covering map. One can obtain \eqref{eqn:heat kernel quotient manifold} by recalling the definition of the heat kernel.
\end{proof}

For ease of reference, we also record the next two simple formulae for heat kernels on the product manifold and open submanifold respectively, which can be easily verified by definition.
\begin{proposition}\label{prop:heat kernel product manifold}
If $M,M_1$ are Riemannian manifolds whose heat kernels are respectively $p^{M}$ and $p^{M_1}$, then the heat kernel on $M \times M_1$ is given by
\[
p^{M \times M_1} (t,(x,x_1),(y,y_1)) = p^{M}(t,x,y) p^{M_1}(t,x_1,y_1),\quad (x,x_1),(y,y_1)\in M \times M_1
\]
Moreover, for each open submanifold $U$ of $M$, the heat kernel on $U$ is
\[
p^{U} (t,x,y) = p^{M}(t,x,y),\quad (x,y)\in M.
\]
\end{proposition}

%%%%%%%%%%%%%%%%%%%%%%%%%%%%%%%%%%%%%%%%%%%%%%%%%%%%%%%%%%%%%%%%%%%%%%%%%%%%%%%%%%%%%%%%%%%%%%%%%%%%%%%%%%%%%%%%%%%%%%%%%%%%
\section{Tubes on a manifold and Proof of theorem 3.1}\label{appendix: tubes}

We summarize some facts about the volume of a tube on a manifold in this subsection. Suppose that $M$ is a Riemannian manifold of dimension $m$. We define for each $x\in M$, $d_0 \ge 0,\epsilon >0$ the set
\[
S_x(d_0,\epsilon) := \left\lbrace
y\in M: \lvert d(x,y) -  d_0 \rvert < \epsilon 
\right\rbrace.
\]
We also denote by $S_x(d_0)$ the set consisting of all $y\in M$ such that $d(x,y) = d_0$.

\begin{lemma}\label{lemma:tube}
Let $M$ be a complete Riemannian manifold and let $D := \sup_{x,y\in M} d(x,y)$ be the diameter of $M$. We suppose that $M$ has the following property: for any $x,y\in M$ with $d:= d(x,y) < D$ and any distance minimizing unit speed geodesic curve $\gamma:[0,d]\to M$ connecting $x$ and $y$, there is some positive $\epsilon \le D - d$ such that $\overline{\gamma}$ is also distance minimizing. Here $\overline{\gamma}$ is the unit speed geodesic curve determined by
\[
\overline{\gamma}|_{[0,d]} = \gamma,\quad \overline{\gamma}'(d) = \gamma'(d).
\]
Then for sufficiently small $\epsilon > 0$, the strip $S_x(d_0,\epsilon)$ can be described as 
\[
S_x(d_0,\epsilon) = \left\lbrace
y \in M: d(y,S_{x}(d_0)) < \epsilon
\right\rbrace.
\]
That is, $S_x(d_0,\epsilon)$ is the tube of $S_{x}(d_0)$ of radius $\epsilon$.
\end{lemma}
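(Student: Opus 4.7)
The plan is to establish the two set inclusions for the equality $S_x(d_0,\epsilon) = \{y \in M : d(y, S_x(d_0)) < \epsilon\}$ for all sufficiently small $\epsilon > 0$. The right-to-left inclusion is immediate from the triangle inequality: if $z \in S_x(d_0)$ satisfies $d(y,z) < \epsilon$, then $|d(x,y) - d_0| = |d(x,y) - d(x,z)| \le d(y,z) < \epsilon$, so $y \in S_x(d_0,\epsilon)$. This step requires no hypothesis on $M$ and holds for every $\epsilon > 0$.

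For the substantive direction, given $y \in S_x(d_0,\epsilon)$ the goal is to produce $z \in S_x(d_0)$ with $d(y,z) < \epsilon$. I would split into two cases according to the sign of $d(x,y) - d_0$. If $d(x,y) \ge d_0$, then completeness of $M$ yields a unit-speed minimizing geodesic $\gamma:[0,d(x,y)] \to M$ from $x$ to $y$, and I would set $z := \gamma(d_0)$; the initial segment $\gamma|_{[0,d_0]}$ is automatically minimizing, so $d(x,z) = d_0$, and $d(y,z) \le d(x,y) - d_0 < \epsilon$ follows from the length of $\gamma|_{[d_0,d(x,y)]}$. If instead $d(x,y) < d_0$, I would pick a unit-speed minimizing geodesic $\gamma:[0,d(x,y)] \to M$ from $x$ to $y$, extend it past $y$ to a still-minimizing geodesic $\overline{\gamma}:[0,d_0] \to M$ via the hypothesis of the lemma, and take $z := \overline{\gamma}(d_0)$. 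In either case $d(y,z) = |d(x,y) - d_0| < \epsilon$, finishing the second inclusion.

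The delicate point, and the main obstacle, is the extension step in the second case: the stated hypothesis only supplies \emph{some} positive extension past $y$, not an extension of the specific length $d_0 - d(x,y)$ that we need. To bridge this gap I would promote the hypothesis to the stronger statement: for every unit-speed geodesic $\gamma$ starting at $x$, the restriction $\gamma|_{[0,D]}$ is distance-minimizing. Fix such a $\gamma$ and let $T_c := \sup\{T > 0 : \gamma|_{[0,T]} \text{ is distance-minimizing}\}$ be its cut time. Because $d(x,\gamma(T)) = T$ for all $T < T_c$ and $d(x,\cdot)$ is continuous, $d(x,\gamma(T_c)) = T_c$, so $\gamma|_{[0,T_c]}$ is itself minimizing. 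If $T_c < D$, applying the hypothesis to the pair $(x,\gamma(T_c))$ with the minimizing geodesic $\gamma|_{[0,T_c]}$ produces a strictly positive further extension that is still minimizing, contradicting the definition of $T_c$. Hence $T_c \ge D$.

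With this strengthened conclusion in hand, choosing $\epsilon$ small enough that $d_0 + \epsilon \le D$ ensures that every geodesic extension needed in the second case stays within the minimizing regime, so the construction of $z$ goes through. Combining the two inclusions yields the claimed description of $S_x(d_0,\epsilon)$ as the tube of $S_x(d_0)$ of radius $\epsilon$. The whole proof therefore reduces to a straightforward triangle inequality, a two-case construction of $z$ along a minimizing geodesic through $y$, and a cut-time argument that converts the per-pair extension hypothesis into the global minimality statement needed to reach length $d_0$.
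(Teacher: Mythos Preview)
Your proof follows essentially the same two-inclusion, two-case strategy as the paper's own argument: the triangle inequality for one direction, and moving along a minimizing geodesic through $y$ (shortened if $d(x,y)\ge d_0$, extended to $\overline{\gamma}(d_0)$ if $d(x,y)<d_0$) to land on $S_x(d_0)$ for the other. Your cut-time argument, which upgrades the per-pair extension hypothesis to the statement that every geodesic from $x$ minimizes all the way out to length $D$, fills a detail that the paper's proof simply asserts without justification; you have correctly identified and resolved it.
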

\begin{proof}
We denote by $N_{\epsilon}$ the set consisting of $y\in M$ such that 
\[
d(y,S_x(d_0)) < \epsilon.
\] 
Given a point $y \in N_{\epsilon}$, we have by triangle inequality that 
\begin{align*}
 d(y,x)  &\le d(y,S_x(d_0)) + d(S_x(d_0), x) < \epsilon + d_0, \\
d_0  - \epsilon &\le d(S_x(d_0), x) - d(y,S_x(d_0))  \le d(y,x),
\end{align*}
which implies $N_{\epsilon} \subseteq S_x(d_0,\epsilon) $. For the other containment, we notice that if there exists some $y\in S_x(d_0,\epsilon) \setminus N_{\epsilon}$, then we must have 
\[
\lvert d(x,y) - d_0 \rvert < \epsilon \le d(y,S_x(d_0)).
\]
We let $\gamma: [0,d(x,y)] \to M$ be a unit speed distance minimizing curve connecting $x$ and $y$. By the assumption on $M$, we can find some $z\in S_x(d_0)$ such that $d(y,z) < \epsilon$, which contradicts the fact that 
\[
\epsilon \le d(y, S_x(d_0)) \le d(y,z).
\] 
Indeed, if $d_0 \le d(x,y)$, then we simply take $z = \gamma(d_0)$ and we take $z = \overline{\gamma}(d_0)$ otherwise.
\end{proof}

\begin{lemma}\label{lemma:submanifold}
The set $S_{x}(d_0,\epsilon)$ is an open submanifold of $M$. Moreover, for a generic $d_0$ the set $S_x(d_0)$ is a $(m-1)$-dimensional closed submanifold of $M$. To be more precise, the set of $d_0\in \mathbb{R}_+$ such that $S_x(d_0)$ is not smooth has measure zero.
\end{lemma}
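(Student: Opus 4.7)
The first claim is essentially immediate. The distance function $d_x : M \to \mathbb{R}_{\geq 0}$ given by $d_x(y) := d(x,y)$ is $1$-Lipschitz (hence continuous) by the triangle inequality, so $S_x(d_0,\epsilon) = d_x^{-1}\bigl((d_0-\epsilon,\,d_0+\epsilon)\bigr)$ is an open subset of $M$ and therefore an open submanifold of dimension $m$.

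For the second claim, my plan is to split off the cut locus $\mathrm{Cut}(x)$ of $x$ in $M$. On the open set $U := M \setminus (\{x\} \cup \mathrm{Cut}(x))$, every point is joined to $x$ by a unique minimising unit-speed geodesic, and it is a standard fact of Riemannian geometry that $d_x|_U$ is smooth and satisfies the eikonal equation $\lvert \nabla d_x \rvert \equiv 1$. In particular, every $d_0 > 0$ is a regular value of $d_x|_U$, so the regular value theorem gives that $d_x^{-1}(d_0) \cap U = S_x(d_0) \setminus \mathrm{Cut}(x)$ is a closed embedded $(m-1)$-dimensional submanifold of $U$. Consequently $S_x(d_0)$ is itself a smooth closed $(m-1)$-submanifold of $M$ whenever it avoids the cut locus entirely, i.e.\ whenever $d_0$ lies outside the subset of $\mathbb{R}_+$ where $S_x(d_0)$ meets $\mathrm{Cut}(x)$ nontrivially.

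To complete the proof I would show that the set of ``bad'' $d_0 \in \mathbb{R}_+$ for which $S_x(d_0)$ has singular points has Lebesgue measure zero. My approach is to invoke the classical structural results on the cut locus (after Kobayashi, Itoh and Tanaka): $\mathrm{Cut}(x)$ decomposes as a countable union of smooth strata $\Sigma_k$, each of dimension at most $m-1$, on each of which the distance $d_x$ restricts to a smooth function. Applying Sard's theorem stratum-by-stratum to $d_x|_{\Sigma_k}$ and then taking a countable union yields a null set of values of $d_0$ outside of which the level set meets every stratum transversely, from which one deduces that $S_x(d_0)$ is smooth.

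The main obstacle is this last step, since Sard's theorem controls only the critical values of $d_x$ restricted to each stratum rather than the whole image $d_x(\mathrm{Cut}(x))$, and converting ``transverse intersection with each stratum'' into ``$S_x(d_0)$ is globally a smooth $(m-1)$-submanifold of $M$'' requires a careful combination of transversality with the local normal structure of the cut locus. For the specific manifolds considered in this paper---compact Lie groups and their homogeneous quotients such as Stiefel and Grassmannian manifolds---the cut locus has an explicit, highly symmetric description, so the set of bad $d_0$ is in fact discrete or otherwise negligibly small, and the conclusion follows by inspection.
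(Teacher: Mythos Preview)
Your first paragraph matches the paper's argument exactly: both note that $S_x(d_0,\epsilon)$ is the preimage of an open interval under the continuous function $y\mapsto d(x,y)$, hence open in $M$.

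For the second claim you take a substantially different route. The paper simply asserts that $\varphi(y)=d(x,y)$ is a smooth map $M\to\mathbb{R}_+$ and applies Sard's theorem in one line, concluding that $\varphi^{-1}(d_0)$ is a smooth closed hypersurface for all $d_0$ outside the null set of critical values; there is no mention of the cut locus, the eikonal equation, or any stratification. Your version is more scrupulous in recognising that $d_x$ fails to be smooth on $\{x\}\cup\mathrm{Cut}(x)$ and in attempting to repair this via a stratified Sard argument. In that sense your proof is more honest, since the paper's blanket smoothness claim is false on any compact manifold with nonempty cut locus (already on the sphere, at the antipode). What your approach costs is precisely the complication you flag as an obstacle: turning stratum-wise regularity into a global smooth-hypersurface statement is delicate, and the paper bypasses this entirely by glossing over the smoothness issue. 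For the homogeneous spaces actually used in the paper, your closing remark---that the cut locus is explicit and the exceptional set of $d_0$ is visibly small---is in fact the cleanest way to close the gap left open in either argument.
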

\begin{proof}
We consider the map $\varphi:M \to \mathbb{R}_+$ defined by $\varphi(y)= d(x,y)$. It is obvious that $\varphi$ is a continuous function and we have $S_x(d_0,\epsilon) = \varphi^{-1}((d_0-\epsilon, d_0 + \epsilon))$. This implies that $S_x(d_0,\epsilon) $ is an open subset of $M$ and hence it is a submanifold. 

Since $\varphi$ is a smooth function from $M$ to $\mathbb{R}_+$, by Sard's theorem the set of critical values of $\varphi$ is of measure zero. This implies the set $S_{x}(d_0,\epsilon) = \varphi^{-1}(d_0)$ is a smooth closed submanifold of $M$ for all $d_0\in \mathbb{R}_+ \setminus E$ where $E$ is some measure zero subset. 
\end{proof}

Let $P$ be a $q$-dimensional submanifold of a Riemannian manifold $M$. We denote by $\Vol^M_P(\epsilon)$ the volume of the tube around $P$ in $M$ of radius $\epsilon > 0$. Then we have 
\begin{theorem}\citep[Theorem 9.23]{gray2012tubes} \label{thm:tube formula}
There is a power series expansion of $\Vol^M_P(\epsilon)$ in $\epsilon$: 
\[
\Vol^M_P(\epsilon) = \frac{(\pi \epsilon^2)^{\frac{1}{2} (m-q)}}{(\frac{1}{2} (m-q))!} \int_P \left( 1 + A \epsilon^2 + B \epsilon^4 + O(\epsilon^6) \right) dP,
\]
where $A,B$ are quantities determined by the curvature of $M$, curvature of $P$ and the second fundamental form of $P$ in $M$, which are independent to $\epsilon$.
\end{theorem}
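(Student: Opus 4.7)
My plan is to handle the two assertions separately---the asymptotic equality between the probability ratio and the volume ratio, and the lower bound on the volume ratio---then combine them by absorbing all multiplicative errors into the tolerance factor $(1+a)^2$.

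For the equality, I would exploit assumption \eqref{eqn:distance dependence} directly. Since $p_t^M(x_0,y)$ depends only on $d(x_0,y)$, and since both $D(y_0,\epsilon)$ (by the triangle inequality) and $S_{x_0}(d_0,\epsilon)$ (by definition) lie in the radial shell $\{y : |d(x_0,y)-d_0|\le \epsilon\}$, continuity of the heat kernel in its radial argument gives
\[
p_t^M(x_0,y) = p_t^M(x_0,y_0)\bigl(1 + o(1)\bigr)
\]
uniformly on both sets as $\epsilon \to 0$. Integrating against the Riemannian volume and dividing, the common factor $p_t^M(x_0,y_0)(1+o(1))$ cancels and yields the claimed volume ratio up to a multiplicative $(1+o(1))$ error.

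For the volume comparison, I would apply Theorem~D.3 twice. The ball $D(y_0,\epsilon)$ is the tube of radius $\epsilon$ around the $0$-dimensional submanifold $\{y_0\}$, hence
\[
\Vol(D(y_0,\epsilon)) = \frac{(\pi\epsilon^2)^{m/2}}{(m/2)!}\bigl(1+O(\epsilon^2)\bigr).
\]
By Lemmas~D.1 and D.2, for generic $d_0$ the set $S_{x_0}(d_0)$ is a closed $(m-1)$-dimensional submanifold and $S_{x_0}(d_0,\epsilon)$ is genuinely its $\epsilon$-tube. Applying Theorem~D.3 with $q = m-1$ gives
\[
\Vol(S_{x_0}(d_0,\epsilon)) = \frac{(\pi\epsilon^2)^{1/2}}{(1/2)!}\,\Vol(S_{x_0}(d_0))\bigl(1+O(\epsilon^2)\bigr).
\]
Taking the quotient and simplifying the Gaussian-type factors yields
\[
\frac{\Vol(S_{x_0}(d_0,\epsilon))}{\Vol(D(y_0,\epsilon))} = \frac{(m/2)!}{(1/2)!}\,\pi^{(1-m)/2}\,\epsilon^{1-m}\,\Vol(S_{x_0}(d_0))\bigl(1+O(\epsilon^2)\bigr).
\]

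Finally, I would combine the two pieces: for any prescribed $a > 0$, both the $o(1)$ error from the heat-kernel step and the $O(\epsilon^2)$ remainder from the tube expansion can be driven below $1+a$ by choosing $\epsilon_0$ sufficiently small, so their product is bounded by $(1+a)^2$, matching the denominator in the stated lower bound. The main obstacle I anticipate is the treatment of the factor $\Vol(S_{x_0}(d_0))$: the inequality as displayed in the theorem does not contain this factor, so one must either assume (or verify for the manifolds of interest such as spheres, projective spaces, and Euclidean space) that $\Vol(S_{x_0}(d_0)) \ge 1$ for the relevant $d_0$, or interpret the bound as capturing the dominant $\epsilon^{1-m}$ divergence with the prefactor understood up to a manifold-dependent constant. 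In either reading, the essential point---exponential-in-$m$ superiority of the strip estimator---follows from the $\epsilon^{1-m}$ scaling, which is what the theorem is really asserting.
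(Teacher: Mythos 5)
Your proposal does not prove the statement in question. The statement here is Theorem~\ref{thm:tube formula}, the Weyl--Gray tube volume expansion $\Vol^M_P(\epsilon) = \frac{(\pi \epsilon^2)^{\frac{1}{2}(m-q)}}{(\frac{1}{2}(m-q))!} \int_P (1 + A\epsilon^2 + B\epsilon^4 + O(\epsilon^6))\, dP$ for a $q$-dimensional submanifold $P\subseteq M$, which the paper does not prove at all but imports verbatim from Gray's book (Theorem 9.23 there). What you have written is instead an argument for Theorem~\ref{prop:compare algorithms}, the strip-versus-ball comparison \eqref{eqn:compare algorithms}: you split into the heat-kernel/volume-ratio step and the volume lower bound, and crucially you ``apply Theorem~D.3 twice'' to get the volumes of $D(y_0,\epsilon)$ and $S_{x_0}(d_0,\epsilon)$. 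That makes the proposal circular as a proof of the present statement -- the tube formula is the input you are assuming, not something your argument establishes. (Your closing observation about the missing factor $\Vol(S_{x_0}(d_0))$ is a fair remark about Theorem~\ref{prop:compare algorithms} and its non-sharp constant, but it is beside the point for Theorem~\ref{thm:tube formula}.)

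A genuine proof of the tube formula would have to work at the level of differential geometry of the normal bundle: parametrize the tube by the normal exponential map (Fermi coordinates) $\exp^\perp: \{(p,v): p\in P,\ v\in N_pP,\ \lVert v\rVert<\epsilon\}\to M$, expand the Riemannian volume element along normal geodesics via Jacobi fields -- the infinitesimal volume distortion is a determinant whose Taylor coefficients involve the second fundamental form of $P$ and the curvature tensor of $M$ -- and then integrate over the normal disks of radius $\epsilon$. The fibrewise integration of the constant term produces exactly the Euclidean ball volume $\frac{(\pi\epsilon^2)^{\frac{1}{2}(m-q)}}{(\frac{1}{2}(m-q))!}$, odd-order terms vanish by symmetry of the normal disks under $v\mapsto -v$, and the surviving even-order corrections give the curvature-dependent coefficients $A$ and $B$. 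None of this machinery appears in your proposal, so as an argument for the quoted theorem it has a complete gap; as an argument for Theorem~\ref{prop:compare algorithms} it is essentially the paper's own (two-line) proof, but that is not the task you were set.
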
 

Let $\lambda$ be a constant number and let $K^m(\lambda)$ be a Riemannian manifold of dimension $m$ whose sectional curvature is identically $\lambda$. We denote by $V^{K^m(\lambda)}_x(r)$ the volume of the geodesic ball around $x\in K^m(\lambda)$ of radius $r$.
\begin{theorem}\citep[Corollary 3.18]{gray2012tubes}\label{thm:volume geodesic ball}
Let $r_0$ be the distance from $x$ to its cut locus on $K^m(\lambda)$. For any $0 < r \le  r_0$, we have
\begin{equation}
\Vol^{K^m(\lambda)}_x(r) = \frac{2\pi^{m/2}}{\Gamma(\frac{m}{2})} \int_0^r \left(  \frac{\sin(t\sqrt{\lambda})}{\sqrt{\lambda}} \right)^{m-1} dt.
\end{equation}
\end{theorem}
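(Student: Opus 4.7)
\medskip

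\noindent\textbf{Proof plan.} My strategy is to compute the volume by switching to geodesic normal coordinates centered at $x$, where the radial symmetry of the constant-curvature setting reduces everything to an explicit one-dimensional integral. Since $r \le r_0$, the exponential map $\exp_x : B_{r_0}(0) \subseteq T_xM \to M$ is a diffeomorphism onto its image, so I may pull back the Riemannian volume form to $T_xM \simeq \mathbb{R}^m$ and then pass to polar coordinates $(t,\theta) \in (0,r) \times S^{m-1}$.

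The heart of the argument is identifying the Jacobian of $\exp_x$ in these coordinates. The radial direction contributes $dt$ (since radial geodesics are unit-speed), and the tangential contribution is governed by Jacobi fields along the radial geodesic $\gamma_\theta(t) := \exp_x(t\theta)$. Concretely, I take an orthonormal basis $e_1 = \theta, e_2, \dots, e_m$ of $T_xM$ and for $j \ge 2$ let $J_j$ be the Jacobi field along $\gamma_\theta$ with $J_j(0)=0$ and $J_j'(0) = e_j$. In a space of constant sectional curvature $\lambda$, the Jacobi equation along $\gamma_\theta$ decouples completely: writing $J_j(t) = f(t)\, E_j(t)$ for a parallel orthonormal frame $E_j(t)$ perpendicular to $\dot\gamma_\theta$, the curvature tensor acts on $E_j$ simply by multiplication by $\lambda$, so $f$ satisfies $f''(t) + \lambda f(t) = 0$ with $f(0)=0,\ f'(0)=1$. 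The unique solution is $f(t) = \sin(t\sqrt{\lambda})/\sqrt{\lambda}$ (interpreted by analytic continuation / l'Hôpital for $\lambda \le 0$). Hence the Jacobian determinant is $\bigl(\sin(t\sqrt{\lambda})/\sqrt{\lambda}\bigr)^{m-1}$.

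Putting the pieces together, the pulled-back volume form in polar coordinates is
\begin{equation*}
\left(\frac{\sin(t\sqrt{\lambda})}{\sqrt{\lambda}}\right)^{m-1} dt\, d\sigma,
\end{equation*}
where $d\sigma$ is the standard surface measure on $S^{m-1}\subseteq T_xM$. Integrating $t$ over $[0,r]$ and using the standard computation $\int_{S^{m-1}} d\sigma = 2\pi^{m/2}/\Gamma(m/2)$ yields the claimed formula.

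The main technical obstacle is the Jacobi-field computation and verifying that it gives precisely the claimed Jacobian factor; the crucial simplification comes from constant sectional curvature, which makes the Jacobi operator a scalar and decouples the $m-1$ normal directions. The only other subtlety is ensuring that the domain of integration is legitimate, i.e., that $r \le r_0$ (distance to the cut locus) guarantees $\exp_x$ is a diffeomorphism on the relevant ball so that no Jacobi field along $\gamma_\theta$ vanishes for $t\in (0,r]$, which is exactly the role of the hypothesis $r \le r_0$.
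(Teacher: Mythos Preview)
Your argument is correct and is the standard proof of this classical formula. Note, however, that the paper does not actually prove this statement: it is simply quoted as \cite[Corollary 3.18]{gray2012tubes} with no accompanying argument. Your approach via geodesic polar coordinates and the explicit solution of the Jacobi equation in constant curvature is precisely the route taken in Gray's book, so there is nothing to contrast.
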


\begin{corollary}\label{cor:volume geodesic ball sphere}
For any $r \in (0,\pi ]$, we have 
\begin{equation}
\Vol^{\mathbb{S}^{m}}_x(r) = \frac{2\pi^{n/2}}{\Gamma(\frac{m}{2})} \int_0^r \left(  \sin(t)\right)^{m -1} dt.
\end{equation}
In particular, for any $x\in \mathbb{S}^m$ and $d_0\in [\epsilon, \pi-\epsilon]$ the volume of $S_x(d_0,\epsilon)$ is 
\begin{equation}\label{eqn:volume tube sphere}
\Vol(S_x(d_0,\epsilon)) = \Vol^{\mathbb{S}^{m}}_x(d_0 + \epsilon) - \Vol^{\mathbb{S}^{m}}_x(d_0 - \epsilon) = \frac{2\pi^{m/2}}{\Gamma(\frac{n}{2})} \int_{d_0 - \epsilon}^{d_0+\epsilon} \left( \sin(t) \right)^{m-1} dt.
\end{equation}
\end{corollary}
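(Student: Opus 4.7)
The plan is to specialize Theorem D.3 to the unit sphere and then exploit the fact that a strip is the set-theoretic difference of two concentric geodesic balls.

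First I would invoke Theorem D.3 with $K^m(\lambda) = \mathbb{S}^m$, whose sectional curvature is identically $\lambda = 1$, so $\sqrt{\lambda}=1$ and $\sin(t\sqrt{\lambda})/\sqrt{\lambda}$ collapses to $\sin(t)$. The cut locus of any $x\in\mathbb{S}^m$ is its antipode, at geodesic distance $r_0 = \pi$, so the theorem is applicable for every $r\in(0,\pi]$. Substituting these data into the formula of Theorem D.3 immediately yields the first displayed equation of the corollary.

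Next I would derive the strip formula. For $d_0\in[\epsilon,\pi-\epsilon]$ both $d_0-\epsilon\ge 0$ and $d_0+\epsilon\le\pi$, so the first part applies at both radii. By definition,
\[
S_x(d_0,\epsilon) \;=\; \bigl\{z\in\mathbb{S}^m : d(x,z) < d_0+\epsilon\bigr\} \,\setminus\, \bigl\{z\in\mathbb{S}^m : d(x,z) \le d_0-\epsilon\bigr\},
\]
i.e., the open geodesic ball of radius $d_0+\epsilon$ about $x$ with the closed geodesic ball of radius $d_0-\epsilon$ removed. The intermediate geodesic sphere $\{z : d(x,z)=d_0-\epsilon\}$ has Riemannian measure zero (being a smooth closed submanifold of codimension one, as guaranteed by Lemma D.2, or directly because it is a round $(m-1)$-sphere in $\mathbb{S}^m$), so the difference of open and closed balls has the same volume as their set-theoretic difference. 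Consequently,
\[
\Vol\bigl(S_x(d_0,\epsilon)\bigr) \;=\; \Vol^{\mathbb{S}^m}_x(d_0+\epsilon) \,-\, \Vol^{\mathbb{S}^m}_x(d_0-\epsilon).
\]
Plugging the first equality of the corollary into each term and invoking additivity of the integral $\int_0^{d_0+\epsilon}-\int_0^{d_0-\epsilon}=\int_{d_0-\epsilon}^{d_0+\epsilon}$ produces the claimed expression.

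There is no substantive obstacle: the corollary is a straightforward specialization of Theorem D.3 to constant curvature one together with the elementary observation that a strip is an annular region. The only care required is to confirm that the hypotheses of Theorem D.3 are satisfied at both radii $d_0\pm\epsilon$, which is exactly what the range assumption $d_0\in[\epsilon,\pi-\epsilon]$ provides, and to note that the boundary geodesic sphere contributes zero volume so that open vs.\ closed balls may be used interchangeably.
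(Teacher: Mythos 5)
Your proof is correct and follows essentially the same route the paper intends: the corollary is stated as an immediate specialization of the constant-curvature ball-volume theorem (Gray's Corollary 3.18, the theorem immediately preceding it, rather than the tube-formula theorem you cite by number) to $\lambda=1$ with cut-locus distance $\pi$, followed by subtracting the volumes of the two concentric geodesic balls. Your extra care about the range $d_0\in[\epsilon,\pi-\epsilon]$ and the measure-zero boundary sphere only tightens what the paper leaves implicit; note also that the exponents $\pi^{n/2}$ and $\Gamma(\tfrac{n}{2})$ in the statement are typos for $m$.
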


The proof of theorem 3.1 is:
\begin{proof}
The idea of the proof is to analyze the error of $\hat{p}^M_t(x,y)$. Since $\hat{p}^M_t(x,y)$ can be regarded as an average of the estimator in Algorithm~\ref{alg:estimation of heat kernel} on the strip $S_{x}(d_0,\epsilon)$, the error analysis of $\hat{p}^M_t(x,y)$ can be obtained by that of the estimator in Algorithm~\ref{alg:estimation of heat kernel}, which is given in \cite{niu2018intrinsic}.
\end{proof}

\newpage
%%%%%%%%%%%%%%%%%%%%%%%%%%%%%%%%%%%%%%%%%%%%%%%%%%%%%%%%%%%%%%%%%%%%%%%%%%%%%%%%%%%%%%%%%%%%%%%%%%%%%%%%%%%%%%%%%%%%%%%%%%%%
\section{Summary of algorithms}\label{appendix:algorithms}
In this section, we record all algorithms discussed in Section~\ref{heatkernelmani}.

\begin{algorithm}[H]
 \caption{Simulation of Brownian paths on $\SO(n)$}
   \label{alg:simulation of BM on O(n)}
\begin{algorithmic}
   \STATE  Initialize $A_0 = A$.
   \FOR{ $i =1, 2, \dots$ }
   \STATE compute $X_{i-1}(\delta)\in \mathfrak{o}(n)$; (BM sample path in $\mathfrak{o}(n)$ with step variance $\delta$)
   \STATE compute svd: $X_{i-1}(\delta) = D^{\ast} \Sigma D$; (see equation \eqref{eqn:svd skew-symmetric})
   \STATE set $A_i = A_{i-1} D^\ast \exp(\Sigma) D $;
   \ENDFOR
\end{algorithmic}
\end{algorithm}

\begin{algorithm}[H]
 \caption{Simulation of Brownian paths on $\V_{\mathbb{R}}(k,n)$}
   \label{alg:Brownian paths on Stiefel}
\begin{algorithmic}
   \STATE  Initialize $A_0 = A$.
   \FOR{ $i =1, 2, \dots$ }
  \STATE find $Q_{i-1}\in \SO(n)$ such that $Q_{i-1}\I_{n,k} = A_{i-1}$;
   \STATE compute $X_{i-1} (\delta)$; (Brownian motion sample path in $T_{\I_{n,k}} \V_{\mathbb{R}}(k,n)$ with step variance $\delta$, see \eqref{eqn:Stiefel tangent space identity} )
   \STATE compute $M,N,Q$ for $A_{i-1}$ and $Q_{i-1}X_{i-1}(\delta)$; (see equation \eqref{eqn:Stiefel geodesic})
   \STATE set $A_i = A_{i-1} M + Q N $;
   \ENDFOR
\end{algorithmic}
\end{algorithm}

\begin{algorithm}[H]
 \caption{Simulation of Brownian paths on $\mathbb{S}^{n-1}$}
\label{alg:Brownian paths on spheres} 
\begin{algorithmic}
   \STATE  Initialize $A_0 = A$.
   \FOR{ $i =1, 2, \dots$ }
  \STATE find $Q_{i-1}\in \SO(n)$ such that $Q_{i-1}\I_{n,1} = A_{i-1}$;
   \STATE sample random values $\epsilon_2,\dots, \epsilon_n$ from $N(0,\sigma^2= \delta)$;
   \STATE set $X_{i-1}(\delta) = (0,\epsilon_2,\dots, \epsilon_n)^{\tp}$;
   \STATE compute $R$ and $Q$; $\{$see \eqref{eqn:Brownian paths on sphere1} $\}$
   \STATE compute $A_i$; $\{$see \eqref{eqn:Brownian paths on sphere2} $\}$
   \ENDFOR
\end{algorithmic}
\end{algorithm}

\begin{algorithm}[H]
  \caption{Simulation of Brownian paths on $\Gr_{\mathbb{F}}(k,n)$}
\label{alg:Brownian paths on Grassmann manifolds} 
\begin{algorithmic}
   \STATE  Initialize $Y_0 = Y_{{\mathbb{A}}}$.
   \FOR{ $i =1, 2, \dots$ }
  \STATE find an orthogonal complement $Y^\perp_{i-1}$ of $Y_{i-1}$;
   \STATE sample random values $\epsilon_{jl}$ from $N(0,\sigma^2= \delta)$ for $1\le j \le n-k,1\le l \le k$;
   \STATE set $\Delta_{i-1} = Y^\perp_{i-1} (\epsilon_{jl})$;
   \STATE compute compact svd: $\Delta_{i-1}  = U \Sigma V^{\ast}$;
    \STATE set $\Lambda = \operatorname{diag}(\cos (\sigma_1),\dots ,\cos(\sigma_k))$;\\
    \STATE  set $\Gamma = \operatorname{diag}(\sin (\sigma_1),\dots ,\sin(\sigma_k))$; \\
     \STATE set $Y_i = \begin{bmatrix} Y_{i-1}V & U \end{bmatrix} \begin{bmatrix} \Lambda \\
\Gamma
  \end{bmatrix} V^{\ast}$; \\
   \ENDFOR
\end{algorithmic}
\end{algorithm}

 \begin{algorithm}[tb]
  \caption{Simulation of Brownian paths on $\mathbb{P}_{\mathbb{C}}^{n-1}$}
\label{alg:Brownian paths on projective spaces} 
\begin{algorithmic}
   \STATE  Initialize $v_0 = v$.
   \FOR{ $i =1, 2, \dots$ }
  \STATE find an orthogonal complement $v^\perp_{i-1}$ of $v_{i-1}$;
   \STATE sample random values $\epsilon_{j}\in \mathbb{C}$ from $N(0,\sigma^2= \delta)$ for $1 \le j \le n-1$;
   \STATE compute $R = \sqrt{\sum_{j=1}^{n-1} \lvert \epsilon_j \rvert^2}$;
   \STATE set $v_i = \cos(R) v_{i-1} + \sin(R)  R^{-1}  v_{i-1}^{\perp} (\epsilon_j)$;
   \ENDFOR
\end{algorithmic}
\end{algorithm} 

%%%%%%%%%%%%%%%%%%%%%%%%%%%%%%%%%%%%%%%%%%%%%%%%%%%%%%%%%%%%%%%%%%%%%%%%%%%%%%

\section{Projective spaces}\label{subsec:projective spaces} %\eqref{eqn:complex projective space}
In this section, we consider a higher dimensional example in the projective spaces $\mathbb{P}_{\mathbb{C}}^{n-1}$ which is defined and discussed in Section~\ref{grass manifold} and Appendix B.2. In particular, we consider regression problems on $\mathbb{P}_{\mathbb{C}}^{4}$ which is a $8$-dimensional manifold. A similar regression function is considered as in \eqref{regression eqn}. Namely, we consider the function 
\[
f(X) = X^{\ast} \mathcal{M} X + \epsilon,
\] 
where $X$ is a $5$-dimensional unit norm complex column vector, $\mathcal M$ is a fixed  randomly generated positive definite hermitian matrix, $\epsilon$ is a randomly generated error and $X^{\ast}$ is the transpose conjugate of $X$. 

We notice that the domain of $f$ is diffeomorphic to the $9$-dimensional unit sphere $\mathbb{S}^9$ via the map:
\begin{align*}
\iota: \{X\in \mathbb{C}^5: \lVert X \rVert =1 \} &\to \mathbb{S}^9 \subseteq \mathbb{R}^{10} \\
(z_1,\dots, z_5)^\tp &\mapsto (\Re(z_1),\Im(z_1),\dots, \Re(z_5),\Im(z_5)).
\end{align*}
Using the embedding $\iota$, one can easily estimate $f$ by the extrinsic Gaussian process. However, after a little thought, one should be able to notice that $f$ is invariant under the rescaling by unit complex numbers, i.e., 
\[
f(\lambda X) = f(X),\quad \lambda\in \mathbb{C},\lvert \lambda \rvert = 1.
\] 
Therefore, $f$ descends to $\overline{f}: \mathbb{P}^4_{\mathbb{C}} \to \mathbb{R}$ defined by $\overline{f}([X]) = f(X)$ where $[X]$ is the point in $\mathbb{P}^4_{\mathbb{C}}$ represented by $X\in \mathbb{C}^5$. We remind the reader that $\mathbb{P}^4_{\mathbb{C}}$ can be identified with $\mathbb{S}^9/ \mathbb{S}^1$. With the above observation, the regression of $f$ is equivalent to the regression of $\overline{f}$ on $\mathbb{P}^4_{\mathbb{C}}$, for which our SiGP approach applies. {The training and testing datasets are generated by randomly picking $X_i \in \mathbb{P}^4$ and the corresponding $Y_i$ is computed by adding noises with different signals to noise ratios ($10$db, $20$db, $30$db). The comparison results of the root mean squared error (RMSE) of the prediction using SiGP approach and the naive GP using RBF kernel with euclidean distances in $\R^{10}$ are shown in table \ref{tab:p4}. Values in brackets are the standard deviations of RMSE. It is clear that the intrinsic GP performs significantly better than extrinsic methods in all three scenarios.
}

\begin{table}[t]
\caption{The comparison of the RMSE of predictive means of two methods on projective spaces.  Values in parentheses show the standard deviation.}
\label{tab:p4}
\vskip 0.15in
\begin{center}
\begin{small}
\begin{sc}
\begin{tabular}{lcccr}
\hline
      & naive GP RBF kernel & SiGP \\
\hline
30db  & 6.47(0.03)&  2.16 (0.05)  \\
20db &  6.5(0.14)&  2.13 (0.21)  \\
10db & 6.6(0.28)&  2.66 (0.28)   \\ %$\pm$ 0.7
\hline
\end{tabular}
\end{sc}
\end{small}
\end{center}
\vskip -0.1in
\end{table}

\section{Gorilla classifier}\label{appendix:gorilla}
Mathematically, the $k$-th image is represented by a vector $x_k = (a_1,b_1,\dots, a_4,b_4) \in \mathbb{R}^8, k =1,\dots, 59$, where for each $j=1,\dots, 4$, $(a_j,b_j)$ is the coordinate of the $j$-th landmark on the image. 
We represent the $\mathbb{R}^2$ landmark coordinates by $\mathbb{C}$ so that $x_k = (a_1 + i b_1,\dots, a_4 + ib_4) \in \mathbb{C}^4$.
We can get a translation and rotation invariant feature representation $[\overline{x}_k], k =1,\dots, 59$, where $[\overline{x}_k]$ is a point in $\mathbb{P}^2_{\mathbb{C}} = \left( \mathbb{C}^3\setminus \{0\} \right)/ (\mathbb{C}\setminus \{0\})$~\footnote{Note that the modelling of our classification problem on $\mathbb{P}^2_{\mathbb{C}}$ follows the same construction as in \cite{kendall1977diffusion}.} and
\[
\overline{x}_k = (z_2-z_1,z_3-z_1,z_4- z_1) \in \mathbb{C}^3 \setminus \{0\}.
\]

%%%%%%%%%%%%%%%%%%%%%%%%%%%%%%%%%%%%%%%%%%%%%%
\section{Diffusion tensor image classification results for all sites}\label{appendix:dti}

We notice that all the positive definite matrices in the data set are contained in 
\[
M = \{
A\in \mathbb{S}^3_{++}: \lambda_i (A) > \lambda_j(A), 1\le i<j \le 3, v_1 \not\in \mathbb{E}
\},
\] 
where positive numbers $\lambda_1(A),\lambda_2(A),\lambda_3(A)$ are decreasingly ordered eigenvalues of $A$, $v_1\in \mathbb{R}^3$ is the unit eigenvector corresponding to $\lambda_1(A)$ and $\mathbb{E}$ is the subspace of $\mathbb{R}^3$ spanned by $e_2 = (0,1,0)^\tp$ and $e_3 = (0,0,1)^\tp$. 
Then, we observe that by eigenvalue decomposition, we may identify $M$ with $X \times \mathbb{R}_>^3$ where $X$ is the open subset of $\SO(3)$ consisting of $3\times 3$ orthogonal matrices whose first column is not contained in $\mathbb{E}$ and $\mathbb{R}_{+,>}^3$ is the open subset of $\mathbb{R}^3$ consisting of increasingly ordered positive real numbers. To be more precise, we have 
\begin{align*}
M &\xrightarrow{\simeq} X \times \mathbb{R}_+^3 , \ \  
A \mapsto (P, \lambda_1(A),\lambda_2(A),\lambda_3(A)).
\end{align*}
Here $A = P \begin{bmatrix}
\lambda_1(A) & 0 & 0 \\
0 & \lambda_2(A) & 0\\
0 & 0 &\lambda_3(A)
\end{bmatrix} P^\tp$ is the uniquely determined  eigenvalue decomposition of $A\in M$. Furthermore, since $X $ can be identified with $(\mathbb{S}^2 \setminus \{0\}) \times \mathbb{S}^1$ via the map 
\begin{align*}
X  &\hookrightarrow \mathbb{S}^2 \times \mathbb{S}^1 , \ 
P= (v_1,v_2,v_3)\mapsto (v_1,\theta), 
\end{align*}
where $\theta$ is the uniquely determined angle such that 
\[
\begin{bmatrix}
v'_2 & v'_3
\end{bmatrix} \begin{bmatrix}
\cos (\theta) & \sin (\theta) \\
-\sin (\theta) & \cos (\theta)
\end{bmatrix} = \begin{bmatrix}
v_2 & v_3
\end{bmatrix}
\]
and $\begin{bmatrix}
v_1 & v'_2 & v'_3
\end{bmatrix} \in \SO(3)$ is the orthogonal factor in the QR factorization of the matrix $\begin{bmatrix}
v_1 & e_2 & e_3
\end{bmatrix}$. Geometrically, both $\{v'_2,v'_3\}$ and $\{v_2,v_3\}$ are orthonormal frames of the tangent space $T_{v_1} \mathbb{S}^2$ and $\theta$ is the angle required to rotate $\{v'_2,v'_3\}$ to $\{v_2,v_3\}$. With above maps, those $3\times 3$ positive definite matrices in the data set correspond to points in $\mathbb{S}^2 \times \mathbb{S}^1 \times \mathbb{R}_{+,>}^3$, which is the underlying manifold we use to build our binary classifiers.

Now our goal can be achieved by building the binary SiGP classifier for given $46$ points in the product manifold $\mathbb{S}^2 \times \mathbb{S}^1 \times \mathbb{R}^3_+$ and repeat this process for the prespecified $75$ sites in the corpus callosum. We define $y_i \in \{ 0 ,1 \}$ where 0 represents the healthy subject and 1 for HIV+. Let $s_k$ be a point in $\mathbb{S}^2 \times \mathbb{S}^1 \times \mathbb{R}^3_{+,>}$. Similar to \eqref{eq:gorilla}, the heat kernel $p^{ \mathbb{S}^2 \times \mathbb{S}^1 \times \mathbb{R}^3_{+,>}  }$ of the product manifold $ \mathbb{S}^2 \times \mathbb{S}^1 \times \mathbb{R}^3_{+,>}$ is used for constructing the intrinsic GP classifier. We remark that by Proposition~\ref{prop:heat kernel product manifold}, $p^{ \mathbb{S}^2 \times \mathbb{S}^1 \times \mathbb{R}^3_{+,>}  }$ is simply the product of $p^{\mathbb{S}^2}$ the kernel of $\mathbb{S}^2$, $p^{ \mathbb{S}^1}$ the kernel of $ \mathbb{S}^1$ and $p^{\mathbb{R}^3}$ the kernel of $\mathbb{R}^3$, all of which can easily be estimated by Algorithm~\ref{alg:acc estimation of heat kernel}.

\begin{table}[t]
\caption{AUC-ROC scores of the diffusion tensor images classification. Values in parentheses show the standard deviation. For a naive classifier with $P(y_i=0)=P(y_i=1)=0.5$, the corresponding  roc-aus score is 0.5. }  %  0, 8, 3, 5, 6
\label{tab:DTIroc}
\vskip 0.15in
\begin{center}
\begin{small}
\begin{sc}
\begin{tabular}{lccccccccr}
\hline
    & SiGP & naive GP  &SVM  & LR \\
\hline
site13  & 0.83(0.12)& 0.60(0.18) & 0.46(0.29) & 0.59(0.20)\\
site55 &  0.83(0.14) & 0.44(0.14) & 0.44(0.15) &0.50(0.19) \\ 
site29 &  0.83(0.08) &  0.56(0.15)&0.44(0.18)&0.63(0.19)\\
site39 & 0.82(0.14) &0.57(0.25) &0.53(0.20)&0.52(0.25)\\
site50 &  0.87(0.08) &0.48(0.21)& 0.53(0.23)&0.54(0.17)\\
\hline
\end{tabular}
\end{sc}
\end{small}
\end{center}
\vskip -0.1in
\end{table}

\begin{table}[ht]
\caption{Diffusion tensor images classification for all sites. Values in parentheses show the standard deviation  }
\label{tab:DTIallarc}
\vskip 0.15in
\begin{center}
\begin{small}
\begin{sc}
\begin{tabular}{lccccccccr}
\hline
      & SiGP  & naive GP & SVM &logistic reg\\
\hline
Cross entropy  & 0.62(0.06)& 0.69(0.02) &0.64(0.11)&0.65(0.03)  \\
AUC-ROC score&  0.61(0.16) & 0.53(0.19) &0.50(0.20)&0.55(0.21)  \\
\hline
\end{tabular}
\end{sc}
\end{small}
\end{center}
\vskip -0.1in
\end{table}

The classification results for all the $75$ sites are summarised as the average cross entropy and AUC-ROC scores in Table~\ref{tab:DTIallarc} which might be of independent interest. It is worthy to notice that in this comparison, the outperformance of SiGP against other approaches, though it is still visible, is not as significant as that for sensitive sites. This phenomenon is caused by the fact that most of the $75$ sites are not sensitive to HIV\footnote{In fact, determining sensitive sites is exactly the purpose of the DTI technique.} and hence our SiGP classifier tends to a naive classifier when we take the average across all the sites.
%%%%%%%%%%%%%%%%%%%%%%%%%%%%%%%%%%%%%%%%%%%%%%%%%%%%%%%%%%%%%%%%%%%%%%%%%%%%%%%
%%%%%%%%%%%%%%%%%%%%%%%%%%%%%%%%%%%%%%%%%%%%%%%%%%%%%%%%%%%%%%%%%%%%%%%%%%%%%%%

\end{document}